\newcommand{\forces}{\!\Vdash\!}
\newcommand{\imp}{\!\rightarrow\!}
\newcommand{\proves}{\vdash}
\newtheorem{Prop}{\bf Proposition}
\newenvironment{proposition}{\begin{Prop}\em }{\end{Prop}}
\newtheorem{Theor}{\bf Theorem}
\newtheorem{Lemma}{\bf Lemma}
\newtheorem{Coro}{\bf Corollary}
\newtheorem{Fact}{\bf Fact.}
\newtheorem{Remark}{\bf Remark}
\newtheorem{Claim}[enumi]{Claim}
\newtheorem{defin}{\bf Definition}
\newenvironment{definition}{\begin{defin} \em}{\end{defin}}
\newtheorem{exam}{\bf Example}
\newenvironment{example}{\begin{exam} \em}{\end{exam}}
\newtheorem{notat}{\bf Notation.}
\newenvironment{proof}{{\bf Proof.}}{\hfill $\slot$}
\newcommand{\slot}{\hfill \mbox{$\Box$}\vspace{\parskip}\\}
\newtheorem{Comment}{\bf Comment}
\newcommand{\bk}{\ensuremath{\mathbf{K}}}
\begin{document}

\title{Knowing the Model}

\author{Sergei Artemov\\ \\
 {\small The City University of New York, the Graduate Center}\\
{\small  365 Fifth Avenue, New York City, NY 10016}\\
{\small {\tt sartemov@gc.cuny.edu}} }
\date{\today}
\maketitle

\begin{abstract}  Epistemic modal logic normally views an epistemic situation as a Kripke model. We consider a more basic approach: 
to view an epistemic situation as a set $W$ of possible states/worlds -- maximal consistent sets of propositions -- with conventional accessibility relations determined by $W$.  We find that in many epistemic situations, $W$ is not a Kripke model: a necessary and sufficient condition for $W$ to be a Kripke model is the so-called {\it fully explanatory property} -- a propositional form of {\it common knowledge of the model} -- which has been a hidden (and overlooked) assumption in epistemic modal logic. 

We sketch a theory that describes epistemic models in their generality. We argue for conceptual and practical value of new models, specifically for representing partial knowledge, asymmetric knowledge, and awareness. 
\end{abstract}

\medskip\par
\section{Preliminaries}\label{prelim}
In this note we will try to present things at both levels, conceptual and technical. On the formal side, we will focus on the propositional $n$-agent epistemic logic  ${\sf S5}^n$, cf. \cite{FHMV95} though all the major findings and suggestions apply to other modal logics as well. Furthermore, similar considerations apply to other classes of epistemic models, e.g., Aumann structures \cite{Aum95,Aum99}.

Informally, by a global state\footnote{In this text we will also be using terms {\it state} or {\it world} for global states, when convenient.} of a multi-agent system we understand a complete description of epistemic states of agents along with state of nature, represented as a set of propositions in an appropriate epistemic language. 

\begin{definition}\label{estate}
In a formal setting, a {\it  global state} is a maximal consistent (over a given logic base, among which ${\sf S5}^n$ is the default) set of formulas. An {\it epistemic model} $(W,\models)$  is a pair of a set $W$ of global states and truth assignment to formulas at each world
\[ w\models F\ \ \mbox{\it iff}\ \ F\in w .\]
\end{definition}

Each epistemic model $(W,\models)$ determines relations $R_i$ of epistemic accessibility for each agent: $uR_i v$ if and only if whatever agent $i$ knows in $u$ is true in $v$. Therefore, each epistemic model $(W,\models)$ has an {\it associated Kripke model}, cf. Definition~\ref{Kripke}, with the same atomic evaluation $\Vdash$ as in $(W,\models)$ 
\[ u\forces p\ \ \mbox{\it iff}\ \ p\in u .\]
Truth assignments in an epistemic model and the associated Kripke model coincide for the atomic propositions, but can differ for compound formulas, cf. Example~\ref{e1}.

Our starting point is an epistemic model $(W,\models)$. We do not analyze the origins of the worlds from $W$. In order to study the problem in its full generality, we {\bf do not assume} that $W$ is a conventional Kripke model (a standard assumption in formal epistemology). This reflects our conceptual approach that a (commonly known) Kripke structure is not \emph{a priori} superimposed on intellectual agents and that the analysis 
of epistemic scenarios should start at the earlier point, with an epistemic model $(W,\models)$. 

\subsection{A Brief Summary}
Given an epistemic model $(W,\models)$, we attempt to analyze the following issues.

\begin{enumerate}
\item {\bf Whether all epistemic models are Kripke models with the same states under the standard accessibility/indistinguishability relation ``everything known at $w$ is true at $v$."}  \underline{The traditional approach in epistemic modal logic treats} \underline{epistemic models as if the answer is ``yes," whereas it is ``no."} There are epistemic models (intuitively, most of them) which are not Kripke models. This simple observation opens the door to studying epistemic models in their full generality, hence closing the loophole in the foundations of epistemic modeling. 
\item {\bf Which epistemic situations can be represented as Kripke models with the standard accessibility relations?}  We show that a necessary and sufficient condition for $(W,\models)$ to be a Kripke model is the so-called {\it fully explanatory property} -- a propositional form of \underline{{\it common knowledge of the model} -- which has been a hidden} \underline{(and overlooked) assumption in epistemic modal logic.} 
\item {\bf What is a conceptual and practical value of the new broader class of models?} We present a body of examples to make a case for (general) epistemic models (Definition~\ref{estate}) as formalizations of  epistemic scenarios. 
\item {\bf Are we suggesting replacing Kripke models with new types of epistemic models?} No. Kripke models continue to play a fundamental technical role in the specification of general epistemic models. The key observation connecting epistemic models and Kripke models is the embedding theorem (Theorem~\ref{two}) stating that \underline{each epistemic model is a sub-model of an appropriate Kripke model.} This suggests a universal ``scaffolding" method of building general epistemic models: take an appropriate Kripke model and carve out a desired subset of states which the knowers consider possible. 
Our findings do not discriminate against Kripke models but rather suggest considering a more general class of models to capture more epistemic subtleties such as partial knowledge, asymmetric knowledge, awareness, and others.
\end{enumerate}

\subsection{Motivations}

We quote \cite{Wil15} for the standard approach to motivate Kripke models in epistemology\footnote{Analyzing the role of knowing the model, normally assumed and not acknowledged in formal epistemology, has been long overdue. The paper that prompted completing this study was \cite{Wil15}.}:
\begin{quote}
{\em 
Informally, we interpret $W$ as a set of mutually exclusive, jointly exhaustive worlds or states, 
$\ldots$ $R$ is a relation of epistemic accessibility: a world w has R to a world x if and only if $\ldots$  whatever the agent knows in $w$ is true in $x$. We define a function {\bf K} from propositions to propositions by
the following equation for all propositions $p$:
\begin{equation}\label{defK}
{\bf K}p = \{w\in W  :  \forall x\in W, wRx \Rightarrow x\in p\}.
\end{equation}

In other words, ${\bf K}p$ is true at a world if and only if $p$ is true at every world
epistemically accessible from that one.  Informally, ${\bf K}p$ is interpreted as the proposition that the agent knows $p$.
}
\end{quote}
Formally, the characterization of $R$ via knowledge at the states in $W$ is 
\begin{equation}\label{defR}
R(w) = \{x\in W :  \forall p, \ {\bf K}p\in w \Rightarrow p\in x \}.
\end{equation}
This yields 
\begin{equation}\label{straight}
{\bf K}F\in w\ \ \ \Rightarrow\ \ \mbox{\it for all $x\in R(w)$},\ F\in x .
\end{equation} 
However, this does not guarantee the converse:
\begin{equation}\label{converse}
(\mbox{\it for all $x\in R(w)$},\ F\in x)\ \ \Rightarrow\ \ {\bf K}F\in w, 
\end{equation}
which is built into definition (\ref{defK}). 
Conceptually, the fact that $F$ holds at some designated set of states should not automatically yield knowledge of $F$ at a given state. 

Technically, equations (\ref{defK}) and (\ref{defR}) do not match. Given knowledge assertions at states of the model, we indeed can find accessibility relation $R$ by (\ref{defR}), and then determine the knowledge modality $\bk$ by (\ref{defK}). The problem is that this $\bk p$ is different from the original knowledge assertion ``$p$ is known."  
\begin{example}{\bf [Technical]}\label{e1}
Model ${\cal M}_1$ in Fig.~1. Consider {\sf S5} with a single propositional letter $p$. Consider also $W$ consisting of one state $w$ generated by $\Gamma=\{p,\neg {\bf K}p\}$.\footnote{State $w$ is constructive: one can check that $\Gamma$ is a complete set of formulas, i.e., for each $F$, either $\Gamma$ proves $F$ or $\Gamma$ proves $\neg F$, and $w$  is the set of formulas derivable from $\Gamma$, cf. also Section~\ref{canon}.} 

\begin{figure}[!h]
\begin{center}
\mbox{
\begin{xy}
(-15,-4)*{w};
(-5,0)*{\mbox{$p, \neg\bk p$}};
(-15,0)*+{\bullet}="1"; 
{\ar@(ul,ur) "1";"1"};
\end{xy}
}\end{center}
\caption{Model $\mathcal{M}_1$.}
\end{figure}
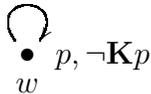
\noindent
The accessibility relation $R$, (\ref{defR}), is reflexive, $wRw$, the truth in the model is membership in $1$: $F$ holds iff $F\in w$. In particular, ${\bf K}p$ is false at $w$. On the other hand, by (\ref{defK}), ${\bf K}p$ ought to be true at $w$. So definitions (\ref{defR}) and (\ref{defK}) do not match in ${\cal M}_1$.
\end{example}
\begin{example}{\bf [Conceptual]}\label{e2}
Kripke models are a convenient vehicle for specifying worlds: each node in a model yields a specific maximal consistent set of formulas. The downside of Kripke specification is that in order to model ignorance of a fact $F$, one has to commit to a hypothetic world at which $F$ is false. Such a world may not exist. 
\begin{quote} {\it Imagine a world $w$ at which an educated agent knows the axioms of Peano Arithmetic {\sf PA}, but does not know a theorem $F$ for which a proof has not yet been found. In a Kripke model, we then have to have a world $v$ deemed possible by the agent at which $\neg F$ holds. However, there cannot be such a consistent world $v$ because all axioms of {\sf PA} should be true at $v$ and ${\sf PA}\cup \{\neg F\}$ is inconsistent. 
}
\end{quote}
A possible way out of this predicament is by epistemic models which naturally allow $F$ to be true at each possible world but yet remain unknown; this is not allowed by Kripkean standards. 
\end{example}

\section{Motivating Epistemic Example}\label{motiv}
In an epistemic situation, a proposition $F$ may be not known for different reasons, e.g., 
\begin{enumerate}
\item there is a state deemed possible by the agent at which $F$ fails; 
\item the agent is unaware of a sufficient justification for $F$;
\item the agent is not aware of $F$. 
\end{enumerate}
\par\noindent
Kripke models fairly represent 1 but are not very good at modeling 2 and 3. If $F$ is true but unknown, a Kripke model is forced to have a hypothetical world in which $F$ fails, and this is not always possible even theoretically. Situation 2 has been studied within the framework of Justification Logic (\cite{Art01a,Art08,Art12,AF15,Fit05}). An example of situation 3 follows.

\begin{example}\label{AB}
{\it Bob is tossing a coin. Ann knows that there are two possibilities, ``heads" or ``tails," but is unaware of an additional condition $Q$ (e.g., the actual bets are so high that she cannot afford to play this game without additional insurance). Condition $Q$ holds at both states but is unknown to Ann. 
}
\end{example}

A Kripkean approach suggests ignoring $Q$ and offers a model ${\cal M}_2$\footnote{In this and other models, cycles of $R$ around states and other redundant $R$-arrows are suppressed for better readability.} which shows Ann's possible states:  this model is {\bf known} to Ann, though she does not know which state is real. This good old Kripke model specifies the truth value of any modal formula with atoms {\it heads} and {\it tails}, but is not faithful to the story, since condition $Q$ is missing. 

\vskip10pt
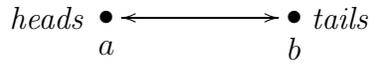
\begin{figure}[!h]
\begin{center}
\mbox{
\begin{xy}
(10,-4)*{b};
(-15,-4)*{a};
(-23,0)*{\mbox{\it heads}};
(16,0)*{\mbox{\it tails}};
(10,0)*+{\bullet}="2";
(-15,0)*+{\bullet}="1"; 
{\ar "1";"2"};
{\ar "2";"1"};
%{\ar@(ul,ur) "1";"1"};
%{\ar@(ul,ur)"2";"2"};
\end{xy}
}\end{center}
\caption{Model $\mathcal{M}_2$.}
\end{figure}

Another possible Kripke model, $\mathcal{M}_3$, incorporates $Q$ and $\neg \bk Q$ by imagining possible states $c$ and $d$ in which $Q$ fails. 

\begin{figure}[!h]
\begin{center}
\mbox{
\begin{xy}
(8,3)*{b};
(-13,3)*{a};
(7,-17)*{d};
(-12,-17)*{c};
(-25,0)*{\mbox{\it heads, Q}};
(20,0)*{\mbox{\it tails, Q}};
(-23,-20)*{\mbox{\it heads}};
(17,-20)*{\mbox{\it tails}};
(10,0)*+{\bullet}="2";
(-15,0)*+{\bullet}="1"; 
(10,-20)*+{\bullet}="4";
(-15,-20)*+{\bullet}="3";
{\ar "1";"2"};
{\ar "2";"1"};
{\ar "1";"2"};
{\ar "2";"1"};
{\ar "1";"3"};
{\ar "3";"1"};
{\ar "2";"4"};
{\ar "4";"2"};
\end{xy}
}\end{center}
\caption{Model $\mathcal{M}_3$.}
\end{figure}
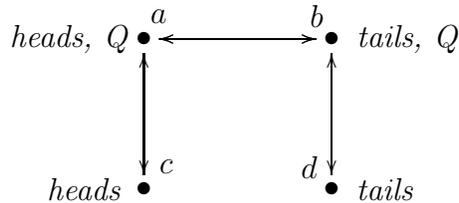
\noindent
Conceptually, in $\mathcal{M}_3$, states $c$ and $d$ are fictional: they are not considered possible by Ann, since Ann is not aware of $Q$ and certainly does not envision possible states $c$ and $d$. $\mathcal{M}_3$ describes a different situation in which  Ann is aware of $Q$'s relevance, but does not know whether $Q$ holds. So, model $\mathcal{M}_3$ is not adequate either: within $\mathcal{M}_3$, Ann should rush to purchase special insurance before playing this game. This is not, however, faithful to the story according to which Ann is unaware of $Q$, does not deem $c$ and $d$ possible (and does not bother about insurance). 

It appears that an adequate epistemic model should have states $a$ and $b$ as in $\mathcal{M}_2$, have $Q$ holding at both states, but yet not known to Ann. 
\vskip10pt
\begin{figure}[!h]
\begin{center}
\mbox{
\begin{xy}
(10,-3)*{b};
(-15,-3)*{a};
(-33,0)*{\mbox{\it heads, Q, $\neg\bk Q$}};
(26,0)*{\mbox{\it tails, Q, $\neg\bk Q$}};
(10,0)*+{\bullet}="2";
(-15,0)*+{\bullet}="1"; 
{\ar "1";"2"};
{\ar "2";"1"};
%{\ar@(ul,ur) "1";"1"};
%{\ar@(ul,ur)"2";"2"};
\end{xy}
}\end{center}
\caption{An incomplete attempt to build Kripke model $\mathcal{M}_4$.}
\end{figure}
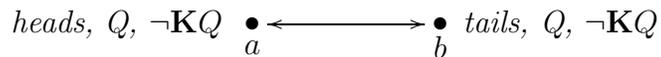
The problem here is that the resulting structure, though faithful to the story, cannot be viewed as a Kripke model. Since $Q$ holds everywhere in $\mathcal{M}_4$, $\bk Q$ should also hold, which is not the case.\footnote{One can easily recognize a version of Moore's Paradox \cite{Moo42} here.} 

In the rest of this note we sketch a theory of new epistemic models that go beyond Kripke models and are capable of representing this and other situations which are normally off the scope of conventional Kripke modeling.

\section{Epistemic Models}
We start from an epistemic model $(W,\models)$, cf. Definition~\ref{estate}, and consider accessibility relations $R_i$ as well as truth/forcing relation $u\forces F$ of a formula at a given world defined by $(W,\models)$, cf. Definition~\ref{Kripke}. 

It appears that there is a tacit assumption in Kripkean formal epistemology\footnote{as presented in \cite{Wil15}} that the induced truth/forcing relation $u\forces F$ coincides with the original ``natural" truth $u\models F$ in $W$.  As we have already noticed in Section~\ref{prelim}, this assumption does not always hold. 

\begin{definition}\label{Kripke}
For each epistemic model $(W,\models)$ over an $n$-agent logical language with knowledge modalities $\bk_1,\ldots,\bk_n$, we define {\it accessibility relations} $R_1,\ldots,R_n$ as
\begin{equation}\label{defRi}
R_i(w) = \{x\in W :  \forall F, \ {\bf K}_i F\in w \Rightarrow F\in x \}.
\end{equation}
The forcing relation ``$\Vdash$" is standard:
\[ u\forces p\ \ \mbox{\it iff}\ \ p\in u .\]
This defines \emph{a Kripke model $(W,R_1,\ldots,R_n,\Vdash)$ associated with $(W,\models)$. }
\end{definition}
\begin{definition}\label{FE}
An epistemic model  is {\em fully explanatory}\footnote{The name {\it fully explanatory} was introduced by Mel Fitting \cite{Fit05} in similar, but slightly different, circumstances.} if the relations $R_i$  satisfy (\ref{converse}):
\[ 
(\mbox{\it for all $x\in R_i(w)$},\ F\in x)\ \ \Rightarrow\ \ {\bf K}_iF\in w .
\] 
\end{definition}
In an epistemic model, once $F$ is known at $u$ (i.e., ${\bf K}_iF\in u$), then $F$ holds everywhere in $R_i(u)$. 
For fully explanatory models, the converse also holds: each $F$ which is true at all states accessible from $u$ ought to be known at $u$. Informally, in fully explanatory models, a proposition $F$ can hold in a set of states $R_i(w)$ only for a reason, namely when $F$ is known to agent $i$ at $w$. 
\begin{example}
A model ${\cal M}_5$ is yet another example of a non-fully explanatory model in which $W$ is the set of all maximal consistent sets over ${\sf S5}^n$ containing a fixed propositional atom $p$ with standard canonical accessibility relations (\ref{defRi}). 
It is immediate that $p$ holds everywhere in $W$. Consider a maximal consistent extension $w$ of a consistent set $\{ p, \neg {\bf K}_1p\}$. Obviously, $p$ holds at $R_1(w)$, and ${\bf K}_1p$ does not. 
\end{example}
\begin{proposition}\label{one} {\it The canonical ${\sf S5}^n$-model is fully explanatory.}
\end{proposition}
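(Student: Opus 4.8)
The plan is to prove the contrapositive of the fully explanatory condition, which is the standard \emph{existence lemma} for canonical models: whenever $\bk_i F \notin w$, there is a state $x \in R_i(w)$ with $F \notin x$. Since each state of the canonical model is a maximal consistent set, $\bk_i F \notin w$ is equivalent to $\neg\bk_i F \in w$, and $F \notin x$ to $\neg F \in x$. So it suffices to produce a maximal consistent set $x$ that contains $\neg F$ together with everything agent $i$ knows at $w$, and to check that such an $x$ lies in $R_i(w)$.

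Concretely, assuming $\neg\bk_i F \in w$, I would form
\[ \Gamma = \{G : \bk_i G \in w\} \cup \{\neg F\} \]
and first show $\Gamma$ is consistent. Suppose not; then finitely many $G_1,\ldots,G_k$ with each $\bk_i G_j \in w$ satisfy $\proves (G_1\wedge\cdots\wedge G_k)\imp F$. This is where normality of the modal base enters: by necessitation and the distribution axiom one derives $\proves (\bk_i G_1\wedge\cdots\wedge\bk_i G_k)\imp \bk_i F$. As $w$ is maximal consistent and contains each $\bk_i G_j$, it is closed under provable consequence, so $\bk_i F\in w$, contradicting $\neg\bk_i F\in w$. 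Hence $\Gamma$ is consistent.

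By Lindenbaum's lemma, extend $\Gamma$ to a maximal consistent set $x$; since $W$ is the \emph{canonical} model, $x\in W$. By construction $\{G : \bk_i G\in w\}\subseteq x$, which is precisely the condition $x\in R_i(w)$ from (\ref{defRi}); and $\neg F\in x$ gives $F\notin x$. This establishes the existence lemma and hence, by contraposition, the property (\ref{converse}) required in Definition~\ref{FE}.

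I expect the only genuine content to be the consistency of $\Gamma$, which rests on the monotonicity of $\bk_i$ under provable implication (necessitation together with distribution). The remaining ingredients---the equivalence of membership and negation-membership in maximal consistent sets, closure under provable consequence, and Lindenbaum extension---are routine. Notably, the specific ${\sf S5}$ axioms (reflexivity, positive and negative introspection) play no role in this direction, so the same argument shows that the canonical model of \emph{any} normal modal logic is fully explanatory.
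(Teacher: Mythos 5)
Your proof is correct, but it takes a different route from the paper's official one. The paper proves the proposition by a one-line appeal to the standard Truth Lemma for the canonical ${\sf S5}^n$-model ($F\in u$ iff $u\forces F$); since the fully explanatory property is stated purely in terms of membership and never mentions forcing, your direct proof of the existence lemma is exactly what is needed, and it is in fact the content hidden inside the modal induction step of that Truth Lemma. Interestingly, the paper itself sketches your argument informally in the remark following the proposition: ``${\it CM}({\sf S5}^n)$ is a saturated model which includes all consistent states. Since $\neg\bk F$ yields that $\neg F$ is consistent, \ldots\ which suffices for the fully explanatory property.'' Your write-up supplies what that sketch leaves imprecise: what must be shown consistent is not $\{\neg F\}$ alone but $\{G : \bk_i G\in w\}\cup\{\neg F\}$ (otherwise the witness need not lie in $R_i(w)$), and that is exactly your normality argument via necessitation and distribution; saturation of the canonical model then guarantees the Lindenbaum extension lands in $W$, which is the one place the specific model matters. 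Your closing observation is also correct and worth keeping: only necessitation and distribution are used, so the same argument shows that the canonical model of \emph{any} normal modal logic is fully explanatory --- a generality that the paper's appeal to the ${\sf S5}$ Truth Lemma obscures.
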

\begin{proof} Let $W$ be the set of all maximal consistent sets of formulas over ${\sf S5}^n$, and the canonical accessibility relations (\ref{defRi}). The claim that the truth assignment ``$\models$" in epistemic model $(W,\models)$ coincides with the truth in $(W,R,\Vdash)$ as a Kripke model is reflected in the standard Truth Lemma for ${\sf S5}$: for any $u\in W$ and formula $F$, 
\[ F\in u\ \ \Leftrightarrow\ \ u\forces F . \]
\end{proof}

Kripke models are exactly {\bf fully explanatory epistemic models}. Here is an informal\footnote{It appears that a natural formalization of this condition leads us beyond the current level of propositional modal logic.} sufficient condition under which an epistemic model is a Kripke model (a fully explanatory model): \[ \mbox{\it  Kripke models are epistemic models {\bf commonly known to all agents}.}\] 
Once $F$ holds everywhere in $R(u)$, the agent knows this and, knowing $R$, can conclude that $F$ holds at all states epistemically possible in $u$, thus coming to justified (by virtue of this argument) knowledge of $F$.  So, in Kripke models, knowledge of $F$ at $u$ given $F$ holds in $R(u)$, does not appear unjustified from nowhere. A justification for such a knowledge 
\[ u\forces {\bf K}F \]
is merely assumed knowledge of the model itself relativized to a specific state $u$. 

Accidentally, the reason why the canonical model ${\it CM}({\sf S5}^n)$ of {\sf S5} is fully explanatory is somewhat different: ${\it CM}({\sf S5}^n)$ is a saturated model which includes all consistent states. Since $\neg {\bf K}F$ yields that $\neg F$ is consistent, for each $\neg {\bf K}F\in u$, $R(u)$ contains a state $v$ with $\neg F\in v$, which suffices for the fully explanatory property. 

We will further discuss the ontological status of epistemic models vs. Kripke models in Section~\ref{fromemtokm}.

\section{On the Structure of Epistemic Models} 

Let $(W,\models)$ be an epistemic model and $R_i$'s its induced accessibility relations (\ref{defRi}). 

\begin{proposition}\label{equiv} Each {\it $R_i$ is an equivalence relation on $W$.} 
\end{proposition}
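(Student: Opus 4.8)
The plan is to verify the three defining properties of an equivalence relation---reflexivity, symmetry, and transitivity---each of which I expect to correspond to exactly one of the characteristic ${\sf S5}$ axioms for $\bk_i$ (factivity T, positive introspection 4, and negative introspection 5). Throughout I would use that every $w\in W$ is a maximal consistent set over ${\sf S5}^n$, so $w$ contains all theorems, is closed under modus ponens, and contains exactly one of $F$, $\neg F$ for each formula $F$. It is convenient to abbreviate the \emph{knowledge core} of agent $i$ at $w$ by $\Gamma_i(w)=\{F : \bk_i F\in w\}$, so that definition (\ref{defRi}) reads simply $x\in R_i(w)\iff\Gamma_i(w)\subseteq x$. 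All three verifications then reduce to chasing membership of modal formulas through the relevant axiom.

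For reflexivity I would invoke factivity. Since $\bk_i F\imp F$ is an ${\sf S5}^n$-theorem, it lies in $w$; hence $\bk_i F\in w$ forces $F\in w$ by modus ponens. This is exactly $\Gamma_i(w)\subseteq w$, i.e.\ $w\in R_i(w)$. For symmetry, suppose $x\in R_i(w)$ and take any $F$ with $\bk_i F\in x$, the goal being $F\in w$. Arguing by contradiction, if $F\notin w$ then $\neg F\in w$, so by the contrapositive of factivity $\neg\bk_i F\in w$, and then negative introspection $\neg\bk_i F\imp\bk_i\neg\bk_i F$ yields $\bk_i\neg\bk_i F\in w$. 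Thus $\neg\bk_i F\in\Gamma_i(w)\subseteq x$, contradicting $\bk_i F\in x$ by consistency of $x$. Hence $F\in w$, establishing $w\in R_i(x)$.

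For transitivity, suppose $x\in R_i(w)$ and $y\in R_i(x)$ and take any $\bk_i F\in w$. Positive introspection $\bk_i F\imp\bk_i\bk_i F$ gives $\bk_i\bk_i F\in w$, that is $\bk_i F\in\Gamma_i(w)\subseteq x$; then $\bk_i F\in x$ gives $F\in\Gamma_i(x)\subseteq y$, so $F\in y$. As $F$ was arbitrary this yields $\Gamma_i(w)\subseteq y$, i.e.\ $y\in R_i(w)$. I do not anticipate a genuine obstacle here: the whole argument is standard maximal-consistent-set reasoning, and the only point requiring care is applying each ${\sf S5}$ principle in the correct direction---matching T to reflexivity, 5 (through the contrapositive of T) to symmetry, and 4 to transitivity---and tracking the consistency/maximality step that converts ``$\bk_i F\notin x$'' into ``$\neg\bk_i F\in x$.'' Everything is uniform in $i$, so proving it for a fixed agent suffices.
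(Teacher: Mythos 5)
Your proof is correct and follows essentially the same route as the paper: the symmetry argument (negation via maximality, contrapositive of factivity, negative introspection, then contradiction with consistency of $x$) is exactly the paper's, and you merely spell out the reflexivity (axiom T) and transitivity (axiom 4) steps that the paper dismisses as immediate.
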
 
\begin{proof} Let $\bk$ denote any of $\bk_i$'s  and $R$ any of $R_i$'s. Reflexivity and transitivity are immediate. Let us check symmetry. Let $wRx$, and suppose ${\bf K}F\in x$. We have to prove that $F\in w$. Suppose $F\not\in w$, then, by reflexivity in ${\sf S5}^n$, ${\bf K}F\not\in w$, hence $\neg{\bf K}F\in w$. By negative introspection, ${\bf K}\neg{\bf K}F\in w$. By definition of $R$, $\neg {\bf K}F\in x$, which is impossible since $x$ is consistent and ${\bf K}F\in x$.
\end{proof} 

The intuition of indistinguishability for states from $R(w)$ in epistemic models is similar to Kripke models: we can interpret $R(w)$ as a set of states indistinguishable from $w$ by facts known to the agent. Apparently, $w\forces {\bf K}F$ yields $R(w)\forces F$ and all facts known at $w$ are true everywhere in $R(w)$. So, a state $x\in R(w)$ cannot be distinguished from $w$ by any fact known to the agent. 

The only, but principal, difference between epistemic models and Kripke models is that in the former, a validity of $F$ in $R(w)$ does not yield knowledge of $F$: there is room for ignorance of agents about valid facts\footnote{We regard this as a feature that makes epistemic models more flexible and realistic.}. In particular, it is possible to have $F$ throughout $R(w)$, but $\neg {\bf K}F$ at each state in $R(w)$. 

The following proposition shows that knowledge assertions respect indistinguishability: either $\bk F$ holds everywhere in $R(w)$, or $\neg\bk F$ holds everywhere in $R(w)$. 

\begin{proposition}{\it $R(w) \forces {\bf K}F$ or $R(w) \forces \neg {\bf K}F$.}
\end{proposition}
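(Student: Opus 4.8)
The plan is to reduce the claim to the introspection principles of ${\sf S5}^n$. Fix the agent in question, writing $\bk$ for $\bk_i$ and $R$ for $R_i$, and recall that truth in an epistemic model is membership (Definition~\ref{estate}), so that $R(w)\forces G$ is to be read as ``$G\in x$ for every $x\in R(w)$.'' Since $w$ is a maximal consistent set it decides $\bk F$: either $\bk F\in w$ or $\neg\bk F\in w$. I would split into these two cases and show that in the first case $\bk F$ belongs to every $x\in R(w)$, while in the second case $\neg\bk F$ belongs to every such $x$; this is exactly the desired dichotomy.

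First, suppose $\bk F\in w$. By positive introspection, i.e. the axiom $\bk F\imp\bk\bk F$ of ${\sf S5}^n$, together with maximal consistency, $\bk\bk F\in w$. The defining condition (\ref{defRi}) of $R$ states that every formula $G$ with $\bk G\in w$ lies in every $x\in R(w)$; applying this with $G=\bk F$ yields $\bk F\in x$ for all $x\in R(w)$, that is, $R(w)\forces\bk F$.

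Second, suppose $\neg\bk F\in w$. By negative introspection, $\neg\bk F\imp\bk\neg\bk F$, available in ${\sf S5}^n$, and maximal consistency, $\bk\neg\bk F\in w$. Invoking (\ref{defRi}) again, now with $G=\neg\bk F$, gives $\neg\bk F\in x$ for all $x\in R(w)$, i.e. $R(w)\forces\neg\bk F$. This exhausts the cases.

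There is essentially no obstacle beyond bookkeeping: the whole argument is driven by the two introspection axioms, which guarantee that whichever of $\bk F$, $\neg\bk F$ holds at $w$ is in fact \emph{known} at $w$ and is therefore propagated by (\ref{defRi}) to the entire class $R(w)$. The one point to watch is that the two cases require the two different introspection directions — positive introspection to move $\bk F$ outward, negative introspection to move $\neg\bk F$ outward — and that in each case the $\bk$-prefixed formula produced is exactly of the shape that the definition of $R$ can consume. As a sanity check, since $R$ is an equivalence relation (Proposition~\ref{equiv}), every $x\in R(w)$ satisfies $R(x)=R(w)$, so the Kripke truth value of $\bk F$ is in any event constant across the class; this furnishes an alternative route to the same conclusion that bypasses the explicit appeal to introspection.
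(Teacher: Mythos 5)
Your main argument is correct, and it takes a route that differs in a worthwhile way from the paper's. The paper argues from a failure witness: supposing the first disjunct fails (its ``Suppose $R(w) \forces {\bf K}F$'' is evidently a typo for the negation), it picks $x\in R(w)$ with $x\forces\neg\bk F$, applies negative introspection \emph{at $x$} to get $x\forces\bk\neg\bk F$, propagates $\neg\bk F$ over $R(x)$, and then needs Proposition~\ref{equiv} to identify $R(x)$ with $R(w)$. You instead decide $\bk F$ \emph{at $w$} by maximal consistency, apply positive introspection in one branch and negative introspection in the other, and let the defining condition (\ref{defRi}) propagate the decided formula over $R(w)$ directly. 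What your decomposition buys is independence from Proposition~\ref{equiv}: you never need $R(x)=R(w)$, only the definition of $R(w)$, so the proof rests on the two introspection axioms alone. The paper's version gets by with negative introspection only, at the price of invoking the equivalence-relation property (whose own proof also uses negative introspection).

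One caution about your closing ``sanity check'': it does not furnish an alternative route to \emph{this} proposition. You fixed --- correctly, and in agreement with how the paper's own proof operates --- the membership reading of $\forces$, under which $R(w)\forces G$ means $G\in x$ for every $x\in R(w)$. Constancy of the \emph{Kripke} truth value of $\bk F$ across the class is a statement about the associated Kripke model, and in a non-fully-explanatory epistemic model Kripke truth and membership can disagree on compound formulas --- that disagreement is the paper's central theme. From equivalence alone, $\bk F\in x$ and $y\in R(x)$ yield only $F\in y$; to obtain $\bk F\in y$ one must first pass to $\bk\bk F\in x$, i.e., through positive introspection inside the maximal consistent set. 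So under the reading you adopted, the appeal to introspection cannot be bypassed; the aside proves a parallel fact about the associated Kripke model, not the stated one.
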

\begin{proof} Suppose $R(w) \forces {\bf K}F$. Then for some $x\in R(w)$, $x\forces \neg {\bf K}F$. By negative introspection, $x\forces {\bf K}\neg {\bf K}F$, hence $R(x)\forces \neg {\bf K}F$. Since, by Proposition~\ref{equiv}, $R(x)=R(w)$, $R(w) \forces \neg {\bf K}F$. 
\end{proof}

\section{Derivations from Hypotheses in Modal Logic}\label{derGamma}

The standard formulation of {\sf S5} postulates the Necessitation rule: 
\[ \proves F\ \ \Rightarrow\ \ \proves {\bf K}_i F .
\] 
However, this rule is not valid in a general setting for {\sf S5}-derivations from assumptions: for some $\Gamma$, $\Gamma \proves F$ does not yield 
$\Gamma \proves {\bf K}_i F$. Therefore, when speaking about derivations from hypotheses in ${\sf S5}^n$, we do not postulate Necessitation.

\begin{definition}\label{fromgamma}
For a given set of formulas $\Gamma$ (here called ``hypotheses'' or ``assumptions") we consider {\em derivations from $\Gamma$}: assume all ${\sf S5}_n$-theorems together with $\Gamma$ and use classical reasoning (rule {\em Modus Ponens}).  The notation 
\[ 
\Gamma\proves A 
\] 
represents `$A$ is derivable from $\Gamma$.' 
\end{definition}

It is important to see the role of Necessitation in reasoning without assumptions and in reasoning from a nonempty set of assumptions. In the former, the rule of Necessitation is not postulated: if $A$ follows from $\Gamma$, we cannot conclude that $A$ is known, since $\Gamma$ itself can be unknown. However, for some ``good'' $\Gamma$'s, Necessitation is a valid rule.

\begin{example}\label{e0} Consider the case of two agents, ${\sf S5}_2$. 
If we want to describe a situation in which proposition $m$ is known to agent 1, we consider the set of assumptions $\Gamma$:
$$\Gamma = \{\bk_1 m\}.$$ 
From this $\Gamma$, by reflection principle $ \bk_1 m \imp m$ from ${\sf S5}_n$, 
 we can derive $m$, 
 \[ \Gamma\proves m .\]
However, we cannot conclude that agent 2 knows $m$\footnote{An easy a counter-model.}:
\[ \Gamma\not\proves\bk_2 m .
\]
Therefore, there is no Necessitation in this $\Gamma$, since we have $\Gamma\proves m$ but $\Gamma\not\proves \bk_2 m$. 
\end{example}

\section{Canonical Models for {\sf S5} with a Single Letter}\label{single}

In this section we will offer a useful elaborate example of canonical model constructions associated to {\sf S5} with a single propositional letter $p$, ${\sf S5}(p)$. 

We first note that the modality-free fragment generated by $\{p\}$, i.e., the usual classical propositional logic with a single propositional letter $p$, admits two possible worlds: one generated by $\{p\}$ and the other generated by $\{\neg p\}$. 

We claim that ${\sf S5}(p)$ admits exactly four possible worlds (maximal consistent sets):
\begin{itemize}
\item 
$A$, generated by $\{\bk p\} (= \{p, \bk p\})$;
\item
$B$, generated by $\{p, \neg \bk p\}$;
\item
$C$, generated by $\{\neg p, \neg\bk \neg p\}$;
\item
$D$, generated by $\{\bk \neg p\} (= \{\neg p, \bk \neg p\})$.
\end{itemize}

{\bf Consistency of each of $A$--$D$} is straightforward since each has an easy Kripke model. Now we check that each of $A$--$D$ is complete, i.e., that each proves $F$ or $\neg F$ for any formula $F$ in the language of ${\sf S5}(p)$.

{\bf Completeness} of $A$. First we note that $A$ is closed under Necessitation: $A\proves F$ yields $A\proves \bk F$. Standard induction on derivations of $F$. The key point here is that $A\proves \bk A.$ Once we establish Necessitation in $A$, we proceed to proving that for each $F$, $A\proves F$ or $A\proves \neg F$. Induction on $F$. Obvious for atomic formulas and Boolean connectives. Let $F=\bk X$. If $A\proves X$, then, by Necessitation,  $A\proves \bk X$. If $A\proves \neg X$, then, by reflexivity, $A\proves \neg\bk X$.

Completeness of $B$. Here Necessitation is not admissible since $B\proves p$, but $B\not\proves \bk p$. We will use the {\sf S5}-normal forms, cf. \cite{MvdH95}. 
\begin{Lemma}[{\sf S5} normal forms] In {\sf S5}, every formula is provably equivalent to a formula in normal form which is a disjunction of conjunctions of type
\begin{equation}\label{normalf}
\alpha\wedge \bk \beta \wedge \neg\bk\gamma_1\wedge\ldots\wedge\neg\bk\gamma_m 
\end{equation}
where $\alpha,\beta,\gamma_1,\ldots,\gamma_m$ are all purely propositional formulas. For ${\sf S5}(p)$ we may assume that each of them is from $\{\top,\bot, p, \neg p\}$. 
\end{Lemma}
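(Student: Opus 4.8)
The statement to prove is the {\sf S5} normal form lemma: every formula is provably equivalent in {\sf S5} to a disjunction of conjunctions of the shape $\alpha\wedge \bk\beta\wedge\neg\bk\gamma_1\wedge\cdots\wedge\neg\bk\gamma_m$ with all $\alpha,\beta,\gamma_j$ purely propositional, and for ${\sf S5}(p)$ each can be taken from $\{\top,\bot,p,\neg p\}$.

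The plan is to argue by induction on the modal depth of a formula, pushing modal operators inward and collapsing nested ones using the characteristic {\sf S5} reduction axioms. First I would recall the equivalences that do the heavy lifting: in {\sf S5} we have $\bk\bk X \leftrightarrow \bk X$, $\bk\neg\bk X \leftrightarrow \neg\bk X$ (negative introspection), and dually $\neg\bk X$ is itself fully introspective, so any modality prefix collapses to a single (possibly negated) $\bk$. The second ingredient is the distribution/pulling-out facts: $\bk(X\wedge Y)\leftrightarrow \bk X\wedge\bk Y$, and the crucial {\sf S5}-specific rule that a subformula already governed by a modality may be pulled out of a $\bk$ — namely $\bk(\bk X\wedge Y)\leftrightarrow \bk X\wedge\bk Y$ and $\bk(\neg\bk X\vee Y)\leftrightarrow \neg\bk X\vee\bk Y$ — because modal subformulas have constant truth value across an equivalence class. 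Iterating these lets me assume that inside any outermost $\bk$ the argument is purely propositional.

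\noindent
Concretely, I would proceed as follows. (1) Use Boolean reasoning to put the formula into disjunctive normal form over the ``literals'' $\bk\beta$, $\neg\bk\gamma$, and propositional atoms. (2) Reduce modal depth: for the innermost modal subformulas, their arguments are already propositional; moving outward, whenever I meet $\bk(\text{something containing a }\bk)$, apply the pull-out equivalences above to extract the modal part, leaving a $\bk$ applied only to propositional material. (3) Collapse iterated single modalities via $\bk\bk\leftrightarrow\bk$ and negative introspection. After these steps every conjunct is a Boolean combination of propositional formulas and expressions $\bk\delta$ with $\delta$ propositional; re-running DNF yields exactly disjuncts of the form (\ref{normalf}), combining all the positive $\bk\beta$'s into a single $\bk\beta$ by $\bk X\wedge\bk Y\leftrightarrow\bk(X\wedge Y)$. (4) For ${\sf S5}(p)$, there are only four propositional equivalence classes, represented by $\top,\bot,p,\neg p$, so each $\alpha,\beta,\gamma_j$ may be replaced by its representative; and $\bk\top$, $\bk\bot$ simplify to $\top,\bot$, absorbing trivial conjuncts.

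\noindent
I expect the main obstacle to be step (2), the careful justification that modal subformulas can always be extracted from under a $\bk$. The underlying semantic fact — that $\bk X$ and $\neg\bk X$ are constant on each $R$-equivalence class, so $\bk$ distributes over them — is exactly the introspection content of {\sf S5}, and turning this into a clean syntactic induction (choosing the right induction measure, such as the number of nested modal operators, and verifying each pull-out equivalence is an {\sf S5}-theorem) is where the real bookkeeping lives. Since this is a standard result, I would cite \cite{MvdH95} for the detailed normalization and only sketch the reductions above, treating the $\{\top,\bot,p,\neg p\}$ specialization as the routine final simplification.
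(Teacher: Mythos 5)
The paper offers no proof of this lemma at all---it simply states it with a citation to Meyer and van der Hoek \cite{MvdH95}, which is exactly where your sketch also defers for the bookkeeping. Your outline is the standard argument from that reference and is sound: the pull-out equivalences you invoke, such as $\bk(\bk X\wedge Y)\leftrightarrow \bk X\wedge\bk Y$ and $\bk(\neg\bk X\vee Y)\leftrightarrow \neg\bk X\vee\bk Y$, are genuine {\sf S5} theorems (consequences of reflexivity plus positive and negative introspection), the modality collapse handles nesting, and the $\{\top,\bot,p,\neg p\}$ specialization is just the four propositional equivalence classes over a single atom, so your proposal is, if anything, more detailed than the paper's treatment.
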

It now suffices to check that for each formula $F$ of type (\ref{normalf}), $B\proves F$ or $B\proves \neg F$. 

If $\alpha=\bot,\neg p$, then $B\proves \neg F$. 

If $\alpha=\top, p$, then $B\proves\alpha$ and we proceed to $\beta$. 

If $\beta=\bot, \neg p$, then, by reflexivity,  $B\proves \neg F$. 

If $\beta=p$, then again,  $B\proves \neg F$. 

If $\beta=\top$, then $B\proves\bk \beta$ and we proceed to $\gamma_i$. 

If at least one of $\gamma_i$ is from $\top$, then $B\proves \neg F$. 

Otherwise, all disjuncts in $F$ are provable in $B$. Indeed, for $\gamma_i=\bot$, use $B\proves \neg\bk\bot$. For $\gamma_i=p$ use the fact that $\neg\bk p\in B$. For $\gamma_i=\neg p$, use reflexivity $p\imp\neg\bk\neg p$. In either case, $B\proves \neg\bk\gamma_i$\footnote{A similar normal form-based proof of completeness can be given for each of $A$--$D$, but we have opted for Necessitation-based proof for $A$ and $D$ to underline the fact that both $A$ and $D$ enjoy Necessitation.}.

Completeness of $C$. Similar to $B$ 

Completeness of $D$. Similar to $A$, since $C$ also enjoys Necessitation. 

The collection of $A$--$D$ {\bf exhausts} all logical possibilities for states over ${\sf S5}(p)$. Indeed, for the remaining four logical possibilities for $p$ and knowledge assertion about $p$, $\{p,\bk \neg p\}$  and $\{\neg p,\bk p\}$ are inconsistent and so are any of its extensions. The last two options: $\{p,\neg\bk\neg p\}\subset A$ and $\{\neg p,\neg\bk p\}\subset C$, hence they generate no new states. 

Finally, we describe the {\bf accessibility relation $R$} on the collection $W=\{A,B,C,D\}$ of all possible states over ${\sf S5}(p)$ as in (\ref{defR}).

By Proposition~\ref{equiv}, $R$ is an equivalence relation on $W$. Consider all six possible pairs of different states in $W$ and rule out the ones that are not accessible from each other. 

Pairs $\{A,C\}$, $\{A,D\}$ are not connected by definition of $R$ since for $(A,X)$ to be in $R$, $p$ should be in $X$, which rules out $\{A,C\}$, $\{A,D\}$. Likewise, $\{B,D\}$ are not connected. 

Pairs $\{A,B\}$ and $\{C,D\}$ are not connected due to positive introspection, e.g., since $\bk p\in A$, $\bk\bk p\in A$ too, hence $(A,X)\in R$ yields $\bk p\in X$; this rules out $\{A,B\}$. 

The only remaining possibility for $R$-connection is pair $\{B,C\}$, and they are related! There should be a state in $W$ accessible from $B$ in which $\neg p$ holds, and $C$ is the only remaining possibility, hence $(B,C)\in R$. 
 The resulting picture of the canonical model for ${\sf S5}(p)$ is
 
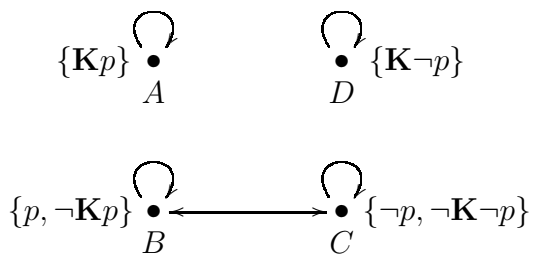
\begin{figure}[!h]
\begin{center}
\mbox{
\begin{xy}
(10,-4)*{C};
(-15,-4)*{B};
(-26,0)*{\mbox{$\{p, \neg\bk p\}$}};
(20,20)*{\mbox{$\{\bk \neg p\}$}};
(10,0)*+{\bullet}="2";
(-15,0)*+{\bullet}="1"; 
{\ar "1";"2"};
{\ar "2";"1"};
{\ar@(ul,ur) "1";"1"};
{\ar@(ul,ur)"2";"2"};
(10,16)*{D};
(-15,16)*{A};
(-23,20)*{\mbox{$\{\bk p\}$}};
(24,0)*{\mbox{$\{\neg p, \neg \bk \neg p\}$}};
(10,20)*+{\bullet}="A";
(-15,20)*+{\bullet}="D"; 
{\ar@(ul,ur) "A";"A"};
{\ar@(ul,ur)"D";"D"};
\end{xy}
}\end{center}
\caption{Canonical model for ${\sf S5}(p)$.}
\end{figure}

\begin{proposition} {\it Each of fifteen non-empty subsets of $\{A,B,C,D\}$ is an epistemic model. Seven of them are fully explanatory and hence Kripke models: 
\[  \{A\},\ \{D\},\ \{A,D\},\ \{B,C\},\ \{A,B,C\},\ \{B,C,D\},\ \{A,B,C,D\}. \]
The remaining eight are epistemic models which are non-fully explanatory and thus not Kripke models. Among them is $\{A,B\}$ --  all states at which $p$ holds; this may be regarded as the canonical model of $\Gamma=\{p\}$ which is therefore not a Kripke model. 
}
\end{proposition}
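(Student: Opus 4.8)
The plan is to reduce everything to two facts already in hand: by Proposition~\ref{one} the full canonical model $\{A,B,C,D\}$ is fully explanatory, and from the analysis of $R$ in the preceding section its $R$-equivalence classes are $\{A\}$, $\{D\}$, and $\{B,C\}$ (the only non-reflexive link being $B\leftrightarrow C$, so $A$ and $D$ are $R$-isolated). The trivial part comes first: by Definition~\ref{estate} an epistemic model is nothing but a set of global states with membership-truth, so each of the fifteen nonempty subsets of $\{A,B,C,D\}$ is automatically an epistemic model, with accessibility given by (\ref{defRi}). The observation I would stress is that the clause ``$wRx$'', i.e. $\forall F(\bk F\in w \Rightarrow F\in x)$, depends only on the two maximal consistent sets $w,x$ and not on the ambient set. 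Hence for any $W'$ the induced relation is $R\cap(W'\times W')$ and $R_{W'}(w)=R(w)\cap W'$.

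The heart of the argument is the characterization that $W'$ is fully explanatory if and only if $W'$ is a union of the classes $\{A\},\{D\},\{B,C\}$, equivalently iff $B\in W'\Leftrightarrow C\in W'$. For the easy direction, if $W'$ is such a union then $R(w)\subseteq W'$ for each $w\in W'$, so $R_{W'}(w)=R(w)$; since the truth ``$F\in x$'' is model-independent, the hypothesis of the fully explanatory condition inside $W'$ is literally identical to the one in the full model, and Proposition~\ref{one} delivers $\bk F\in w$. Thus the seven listed unions of classes are fully explanatory.

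For the converse, suppose $W'$ is not such a union; then without loss of generality $B\in W'$ but $C\notin W'$ (the isolated states $A,D$ never obstruct closure). Now $R_{W'}(B)=\{B,C\}\cap W'=\{B\}$, and taking $F=p$ we have $p\in B$ yet $\neg\bk p\in B$, so the fully explanatory implication fails; the symmetric case $C\in W',B\notin W'$ fails via $F=\neg p$, using $\neg p\in C$ and $\neg\bk\neg p\in C$. Enumerating the subsets that are unions of classes then yields exactly $\{A\},\{D\},\{A,D\},\{B,C\},\{A,B,C\},\{B,C,D\},\{A,B,C,D\}$, the seven fully explanatory ones, which by the equivalence ``Kripke models are exactly fully explanatory epistemic models'' are precisely the Kripke models among the fifteen. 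The remaining eight are the subsets containing exactly one of $B,C$, namely $\{B\},\{C\},\{A,B\},\{A,C\},\{B,D\},\{C,D\},\{A,B,D\},\{A,C,D\}$; each is non-fully-explanatory by the displayed counterexample. In particular $\{A,B\}$ collects exactly the states at which $p$ holds, so it is the canonical model of $\Gamma=\{p\}$ and is therefore not a Kripke model.

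The one place demanding care, and the step I would treat as the main obstacle, is the model-independence of $R$ paired with the observation that deleting a state from the class $\{B,C\}$ strictly shrinks the relevant $R_{W'}(w)$ and thereby exposes a formula (here $p$ or $\neg p$) that is true throughout the accessibility set but unknown at $w$. Once this is pinned down, both directions of the characterization and the final count follow mechanically, and no further computation with the $\sf S5$ normal forms is needed beyond what the preceding section already supplies.
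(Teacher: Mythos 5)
Your proof is correct. The paper states this proposition without any proof at all --- it is meant to follow from the preceding analysis of the canonical model for ${\sf S5}(p)$ --- and your argument is exactly that intended reasoning made explicit: the defining clause of (\ref{defRi}) depends only on the pair of maximal consistent sets, so $R_{W'}(w)=R(w)\cap W'$; the seven listed subsets are precisely the unions of the $R$-classes $\{A\}$, $\{D\}$, $\{B,C\}$, and these inherit full explanatory property from the full canonical model (Proposition~\ref{one}); and any subset containing exactly one of $B$, $C$ fails the property with witness $p$ or $\neg p$. Your write-up thus supplies the details the paper leaves implicit, and your enumeration (seven unions of classes, eight subsets containing exactly one of $B$, $C$) matches the paper's lists exactly.
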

For a general theory of canonical models for sets of assumptions $\Gamma$ cf. Section~{\ref{canon}.

\section{From Epistemic Models to Kripke Models}\label{fromemtokm}

Each Kripke model $(W,R,\Vdash)$, regarded as a set of states (maximal consistent sets of formulas), is an epistemic model $(W,\Vdash)$. The converse obviously does not hold: there are epistemic models $(W,\models)$ which are not Kripke models with respect to the induced accessibility relation $R$, cf.(\ref{defR}). In this regard, epistemic models conceptually and technically extend the toolbox for epistemic modeling without rejecting any of the ``old" models. 

Kripke models are a convenient vehicle for specifying worlds: each node in a model yields a specific maximal consistent set of formulas. The downside of Kripke specification is that in order to model ignorance of a fact $F$, one has to commit to a hypothetic state at which $F$ is false. As we have already noted, such a state may not exist. 

In this section we show that each epistemic model is a sub-model of an appropriate Kripke model.

\begin{Theor}\label{two} For any epistemic model $(W,\models)$ with induced accessibility relations $R_i$ (\ref{defRi}), there is a Kripke model $(\widetilde{W},\widetilde{R_1},\ldots,\ \widetilde{R_n},\Vdash)$ such that 

a) $W\subseteq \widetilde{W}$ (in particular, for each $u\in W$ and each $F$, $u\models F$ iff $u\forces F$); 

b) $R_i\subseteq \widetilde{R_i}$. 

\end{Theor}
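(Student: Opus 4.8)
The plan is to take for $\widetilde{W}$ the full canonical ${\sf S5}^n$-model, i.e., the set of \emph{all} maximal consistent sets of formulas over ${\sf S5}^n$, equipped with the canonical accessibility relations $\widetilde{R_i}$ given by (\ref{defRi}) (now with $x$ ranging over $\widetilde{W}$ rather than over $W$) and with the standard forcing $\Vdash$. By Proposition~\ref{one} this structure is fully explanatory, hence a genuine Kripke model in the sense of Definitions~\ref{Kripke}--\ref{FE}; in particular the Truth Lemma $F\in u \Leftrightarrow u\forces F$ holds for every $u\in\widetilde{W}$. The whole construction then reduces to checking that $(W,\models)$ sits inside this single fixed object in the two senses required, so I would not introduce any new state space beyond the canonical one.

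For part (a), the inclusion $W\subseteq\widetilde{W}$ is immediate: by Definition~\ref{estate} every element of $W$ is already a maximal consistent set over ${\sf S5}^n$, hence a point of the canonical model. For the coincidence of truth values I would chain two equivalences: for $u\in W$ and any $F$, Definition~\ref{estate} gives $u\models F$ iff $F\in u$, while the Truth Lemma supplied by Proposition~\ref{one} gives $u\forces F$ iff $F\in u$; together these yield $u\models F \Leftrightarrow u\forces F$.

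For part (b), the key observation is that the defining clause of the canonical relation in (\ref{defRi}), namely $\forall F\,(\bk_i F\in w \Rightarrow F\in x)$, depends only on the two sets $w$ and $x$ themselves and not on the ambient collection of states. Hence, given $(w,x)\in R_i$ with $w,x\in W$, the very same clause certifies $x\in\widetilde{R_i}(w)$ once we note that $x\in\widetilde{W}$; this gives $R_i\subseteq\widetilde{R_i}$. (In fact $R_i$ and $\widetilde{R_i}$ agree on $W\times W$, but only the inclusion is claimed and only it is needed.)

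I expect no serious obstacle: essentially all the real work is already packaged in Proposition~\ref{one}, whose proof invokes the standard Truth Lemma for ${\sf S5}$. The one point that warrants care — and the only place a carelessly stated argument could fail — is the \emph{monotonicity} of the canonical accessibility relation under enlarging the state space: passing from $W$ to $\widetilde{W}$ must not delete any existing $R_i$-edge. This holds precisely because the membership condition in (\ref{defRi}) is absolute, so enlarging the universe can only add new accessible states, never remove old ones, which is exactly what makes the inclusion in (b) go through.
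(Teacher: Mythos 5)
Your proposal is correct and follows essentially the same route as the paper: both take $\widetilde{W}$ to be the canonical model ${\it CM}({\sf S5}^n)$, identify truth with membership on both sides (via the Truth Lemma packaged in Proposition~\ref{one}), and obtain (b) from the fact that the defining condition in (\ref{defRi}) depends only on the pair of states and not on the ambient universe. Your write-up merely makes explicit the monotonicity observation that the paper compresses into ``the definitions of $uR_iv$ and $u\widetilde{R_i}v$ coincide.''
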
 
\begin{proof} We establish a uniform (nonconstructive) version of Proposition \ref{two}. Take 
$$(\widetilde{W},\widetilde{R_1},\ldots,\ \widetilde{R_n},\Vdash)$$ to be the canonical model 
${\it CM}({\sf S5}^n)$: each $u\in W$ is also a world in ${\it CM}({\sf S5}^n)$ and in both models, $F$ holds at $u$ iff $F\in u$. For (b) it suffices to note that the definitions of $uR_iv$ and $u\widetilde{R_i}v$ coincide. 
\end{proof}
\begin{example}
Here is the finite example of such an embedding. The aforementioned epistemic model ${\cal M}_1$ from Example~\ref{e1} is naturally embedded into Kripke model ${\cal M}_6$. A singleton model ${\cal M}_1$, as a world, coincides with world $w$ in ${\cal M}_5$. 

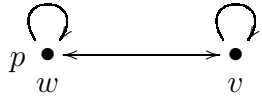
\begin{figure}[!h]
\begin{center}
\mbox{
\begin{xy}
(10,-4)*{v};
(-15,-4)*{w};
(-19,-1)*{\mbox{$p$}};
(10,0)*+{\bullet}="2";
(-15,0)*+{\bullet}="1"; 
{\ar "1";"2"};
{\ar "2";"1"};
{\ar@(ul,ur) "1";"1"};
{\ar@(ul,ur)"2";"2"};
\end{xy}
}\end{center}
\caption{Model $\mathcal{M}_5$.}
\end{figure}
\noindent
We see here that the Kripke model requires two worlds to emulate a singleton epistemic model. Sometimes the blow-up of the number of worlds when embedding an epistemic model to a Kripke model is infinite: it is easy to build a singleton epistemic model for which the corresponding Kripke model is necessarily infinite. 
\end{example}

In view of Theorem~\ref{two}, why do we need Epistemic Models when we can specify the same states by means of conventional Kripke Models? 
Epistemic models and Kripke models provide different kinds of analysis and, in a way, complement each other. 

A Kripke model of a set of assumptions $\Gamma$ models a world containing $\Gamma$ by emulating negative knowledge assertions $\neg {\bf K}F$ geometrically,  throwing in hypothetical worlds to represent the nested ignorance of the agent. Sometimes this approach is intuitive, and helpful, but sometimes the number of auxiliary hypothetical worlds is excessive and their ontological status is dubious (cf. Example~\ref{e2}). These hypothetical worlds can become inconsistent when $\neg F$ contradicts underlying assumptions about the knowledge base of agents. 

Epistemic models of a set of assumptions $\Gamma$ aim rather at describing a set of worlds compatible with $\Gamma$. In an epistemic model for $\Gamma$, all worlds contain $\Gamma$, and the matter of representing negative knowledge assertions by additional hypothetical worlds, possibly not compatible with $\Gamma$, is ignored as an unnecessary technicality which can distort the epistemic picture. 

\begin{definition} Let $\Gamma$ be a set of formulas. By $\Gamma \models F$ we understand the situation when for each epistemic model ${\cal M}$ and its state $u$, 
\[ {\cal M}, u \models\Gamma\ \ \Rightarrow\ \ \ {\cal M}, u \models F .\] 

\end{definition}

\begin{proposition}
{\it Soundness and completeness of ${\sf S5}^n$ w.r.t. epistemic models: $$\Gamma \proves F\ \ \mbox{iff}\ \ \Gamma \models F.$$ }
\end{proposition}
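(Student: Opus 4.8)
The plan is to reduce validity in epistemic models to a membership condition on single maximal consistent sets and then invoke the standard Lindenbaum construction. The first observation is that, by Definition~\ref{estate}, truth at a world is nothing but membership: for any epistemic model $(W,\models)$ and any $u\in W$ we have $u\models F$ iff $F\in u$, and this depends only on the single set $u$, not on the remaining worlds of $W$. Since every maximal consistent set $u$ is itself a legitimate (singleton) epistemic model $\{u\}$, and conversely every world of every epistemic model is a maximal consistent set, the relation $\Gamma\models F$ is equivalent to the purely set-theoretic statement that \emph{for every maximal consistent set $u$ over ${\sf S5}^n$, if $\Gamma\subseteq u$ then $F\in u$.} This is the reformulation I would prove both directions against.

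For soundness ($\Gamma\proves F\Rightarrow\Gamma\models F$) I would use that a maximal consistent set $u$ contains every ${\sf S5}^n$-theorem and is closed under the derivation-from-hypotheses relation of Definition~\ref{fromgamma} (i.e.\ it is deductively closed under classical consequence from $u$ together with the ${\sf S5}^n$-theorems). Then, given $\Gamma\proves F$ and any maximal consistent $u$ with $\Gamma\subseteq u$, monotonicity gives $u\proves F$, and deductive closure yields $F\in u$; hence $u\models F$. By the reformulation, $\Gamma\models F$.

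For completeness I would argue the contrapositive. Assume $\Gamma\not\proves F$. Because the relation $\proves$ of Definition~\ref{fromgamma} omits Necessitation and is just classical reasoning (Modus Ponens) over the fixed theory ${\sf S5}^n$, the Deduction Theorem holds for it; consequently $\Gamma\cup\{\neg F\}$ is consistent, since otherwise $\Gamma,\neg F\proves\bot$ would give $\Gamma\proves F$. A Lindenbaum extension then produces a maximal consistent set $u$ with $\Gamma\cup\{\neg F\}\subseteq u$. For this $u$ we have $\Gamma\subseteq u$, so $u\models\Gamma$, while $F\notin u$ (as $u$ is consistent and $\neg F\in u$), so $u\not\models F$. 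Taking the singleton epistemic model $\{u\}$ witnesses $\Gamma\not\models F$.

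The part requiring the most care is not the model theory but the match between the two deductive notions in play: the global states are maximal consistent sets ``over ${\sf S5}^n$,'' while $\models$-consequence must be shown equivalent to the hypothesis-derivation $\proves$ that \emph{deliberately drops Necessitation} (Section~\ref{derGamma}). The main obstacle is therefore to confirm that this is exactly the right consequence relation --- that a set is consistent in the sense defining global states precisely when it does not $\proves$-derive $\bot$, and that the Deduction Theorem and Lindenbaum's Lemma go through for $\proves$ while the underlying theory ${\sf S5}^n$ still contributes all of its theorems, including those proved via Necessitation at the theory level. Once this bookkeeping is pinned down, both directions follow immediately from deductive closure and the Lindenbaum construction.
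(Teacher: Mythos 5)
Your proof is correct, and its soundness half coincides with the paper's: both arguments amount to the observation that a maximal consistent set is deductively closed, so any maximal consistent extension of $\Gamma$ contains $F$ whenever $\Gamma\proves F$. The completeness half takes a genuinely different route. The paper invokes Kripke completeness of ${\sf S5}^n$ (for derivations from hypotheses) as a black box: from $\Gamma\not\proves F$ it extracts a Kripke model and a state $u$ with $u\Vdash\Gamma$ and $u\not\Vdash F$, and then regards that state as a singleton epistemic model. You bypass Kripke models entirely: the Deduction Theorem (valid here because $\proves$ has Modus Ponens as its only rule over the fixed stock of ${\sf S5}^n$-theorems) turns $\Gamma\not\proves F$ into consistency of $\Gamma\cup\{\neg F\}$, and a Lindenbaum extension yields the maximal consistent set directly, which serves as the singleton witness. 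Both arguments manufacture essentially the same object, but yours is more self-contained --- it needs no Truth Lemma and no completeness theorem for Kripke semantics, only the reformulation that $\Gamma\models F$ means membership of $F$ in every maximal consistent extension of $\Gamma$ --- whereas the paper's is shorter on the page by outsourcing the hard work to the (nontrivial, strong-completeness) Kripke result. Your reformulation also makes explicit something the paper leaves implicit: truth at a world in an epistemic model depends only on that world, never on the surrounding model, which is exactly why singleton models suffice as countermodels.
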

\begin{proof}
Soundness. If $\Gamma \proves F$, then $F$ belongs to any maximal consistent extension of $\Gamma$ and hence is true at each state of each epistemic model of $\Gamma$.

Completeness follows from Kripke completeness: if $\Gamma\not\proves F$, then there is a Kripke model ${\cal M}$ and its state $u$ such that 
${\cal M}, u \forces \Gamma$ and ${\cal M}, u \not\forces F$. This state $u$ may itself be regarded as an epistemic model in which $\Gamma$ holds and $F$ does not. 
\end{proof}

\section{Canonical Epistemic Models in a General Setting}\label{canon}

Kripke models constitute a comprehensive semantical tool in modal logic: every consistent configuration is realized in an appropriate node of the canonical model. However, in modal logic we don't normally care of how ``possible" the states in a Kripke models are, their formal consistency is sufficient. Epistemic scenarios are different. We can imagine a situation in which some propositions $\Gamma$ (e.g., reflecting the state of nature) should hold at all possible states. Furthermore, if this $\Gamma$ is not common knowledge the corresponding set of states is, generally speaking, not a Kripke model, and requires a broader approach. 

Suppose we are interested in epistemic models of a given set of assumptions $\Gamma$. Taking a Kripke model in which $\Gamma$ holds at some state 
\[ {\cal M},u \forces \Gamma \]
gives us a singleton epistemic model corresponding to world $u$ in ${\cal M}$. Other worlds in ${\cal M}$ may be inconsistent with $\Gamma$. 
If we want to study a set of all worlds compatible with $\Gamma$, we might wish to consider canonical epistemic models of sets of assumptions $\Gamma$ over a given modal logic, here {\sf S5}. 

\begin{definition}
A {\it canonical epistemic model of a set of formulas $\Gamma$}, ${\it CM}(\Gamma)$ is, by definition, the collection of all worlds containing $\Gamma$. \end{definition}

Canonical models (in a variety of disguises) play a pivotal role in establishing completeness theorems and are used for other purposes as well. 

We show that for many (intuitively, most) $\Gamma$'s, the corresponding canonical model ${\it CM}(\Gamma)$ is not fully explanatory, hence not a Kripke model. We give a criterion of when the canonical model of $\Gamma$ is a Kripke model: iff \emph{$\Gamma$ is closed under Necessitation} iff \emph{$\Gamma$ proves its own common knowledge}. 

\begin{example} Canonical model  ${\it CM}(p)$ for $\Gamma = \{p\}$ in {\sf S5} has been described in Section~\ref{single}.  There are two possible worlds in ${\it CM}(p)$, generated by $\{{\bf K}p\}$ (world $A$) and $\{p, \neg{\bf K}p\}$ (world $B$).  Worlds $A$ and $B$ are not connected by the relation $R$, $p$ holds at both worlds, but is not known in $B$. The canonical model ${\it CM}(p)$ is not a Kripke model, since $p$ holds in ${\it CM}(p)$, but ${\bf K}p$ does not.
\end{example} 

\subsection{Common knowledge and Necessitation}
In this section we consider a representative case of two agents. 
We will use abbreviations: for ``everybody's knowledge''
\[ {\bf E} X = {\bk_1}X\wedge {\bk_2}X, \]
and ``common knowledge'' 
\[ {\bf C}X = \{X,\ {\bf E}X,\ {\bf E}^2 X,\  {\bf E}^3 X,\ \ldots  \}.  \] 
As one can see, ${\bf C} X$ is an infinite (though quite regular and decidable) set of formulas. Since modalities $\bk_i$ commute with the conjunction $\wedge$, ${\bf C}X$ is the set of all formulas which are $X$ prefixed by iterated knowledge modalities:
\[ {\bf C}X= \{P_1P_2\ldots P_k X \mid k=0,1,2,\ldots,\ \  P_i \in \{ \bk_1,\bk_2\}\}.\]
Naturally, $${\bf C}\Gamma=\bigcup \{{\bf C}F\mid F\in\Gamma\}$$ that represents ``$\Gamma$ is common knowledge.'' The following proposition states that the rule of Necessitation corresponds to derivable common knowledge of assumptions. 

\begin{proposition}\label{ck=nec}  {\em A set of formulas $\Gamma$ is closed under Necessitation if and only if $\Gamma\proves {\bf C}\Gamma$, i.e., that $\Gamma$ proves its own common knowledge.}
\end{proposition}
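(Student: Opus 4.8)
The plan is to prove the two implications separately, using the characterization of derivations from hypotheses via the classical deduction theorem together with genuine Necessitation for $\proves$ \emph{without} assumptions, which is available to us because all ${\sf S5}^n$-theorems are already among our premises (cf. Definition~\ref{fromgamma}).

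For the easy direction, suppose $\Gamma$ is closed under Necessitation. I want to show $\Gamma\proves {\bf C}\Gamma$, i.e. $\Gamma\proves P_1\cdots P_k F$ for every $F\in\Gamma$ and every prefix $P_1\cdots P_k$ with $P_j\in\{\bk_1,\bk_2\}$. This is a straightforward induction on $k$: for $k=0$ we have $\Gamma\proves F$ since $F\in\Gamma$; for the inductive step, given $\Gamma\proves P_2\cdots P_k F$, closure under Necessitation (applied to the agent named by $P_1$) yields $\Gamma\proves P_1 P_2\cdots P_k F$.

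The substantive direction is the converse. Assume $\Gamma\proves {\bf C}\Gamma$ and suppose $\Gamma\proves A$; I must derive $\Gamma\proves \bk_i A$ for each $i$. First I would invoke finiteness of derivations to fix a finite $\{F_1,\ldots,F_n\}\subseteq\Gamma$ from which $A$ is derivable, and then apply the classical deduction theorem (recall that derivations from $\Gamma$ use only Modus Ponens on top of the ${\sf S5}^n$-theorems) to obtain the genuine theorem $\proves (F_1\wedge\cdots\wedge F_n)\imp A$. Now genuine Necessitation applies to this theorem, giving $\proves \bk_i\bigl((F_1\wedge\cdots\wedge F_n)\imp A\bigr)$; distributing $\bk_i$ through the implication (axiom $K$) and through the conjunction (a standard ${\sf S5}$ theorem) yields $\proves (\bk_i F_1\wedge\cdots\wedge \bk_i F_n)\imp \bk_i A$, which is therefore available as a premise in any $\Gamma$-derivation.

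Finally I would feed in the hypothesis $\Gamma\proves {\bf C}\Gamma$: since each $\bk_i F_j$ lies in ${\bf C}F_j\subseteq {\bf C}\Gamma$, we have $\Gamma\proves \bk_i F_j$ for every $j$, hence $\Gamma\proves \bk_i F_1\wedge\cdots\wedge \bk_i F_n$; one application of Modus Ponens then gives $\Gamma\proves \bk_i A$, as required. I expect the only delicate point to be the bookkeeping in this direction -- specifically, justifying the passage from ``$A$ is derivable from $\Gamma$'' to an honest ${\sf S5}^n$-theorem via the deduction theorem, so that genuine Necessitation (which is \emph{not} valid for $\proves$ from arbitrary assumptions, cf. Example~\ref{e0}) can legitimately be applied. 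Everything else reduces to propositional manipulation and the commutation of $\bk_i$ with $\wedge$.
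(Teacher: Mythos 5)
Your proof is correct, but the substantive direction is organized differently from the paper's. The paper proves that $\Gamma\proves{\bf C}\Gamma$ implies closure under Necessitation by a direct induction on derivations from $\Gamma$: the base cases are ${\sf S5}^n$-theorems (handled by Necessitation inside ${\sf S5}^n$) and members of $\Gamma$ (handled by the hypothesis $\Gamma\proves{\bf C}\Gamma$), and the \emph{Modus Ponens} step is handled by the distributivity axiom. You instead invoke finiteness of derivations plus the classical deduction theorem to compress an entire $\Gamma$-derivation of $A$ into a single honest theorem $\proves(F_1\wedge\cdots\wedge F_n)\imp A$, apply genuine Necessitation once, distribute $\bk_i$ over $\imp$ and $\wedge$, and then discharge the antecedent using $\Gamma\proves\bk_i F_j$, which your hypothesis supplies since $\bk_i F_j\in{\bf C}F_j\subseteq{\bf C}\Gamma$. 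Both arguments rest on the same three ingredients (Necessitation for theorems, the $K$ axiom, and the hypothesis covering members of $\Gamma$), so the difference is one of packaging: the paper's induction is self-contained and does not presuppose the deduction theorem, while your route is more modular and makes visible exactly why Definition~\ref{fromgamma} was set up with \emph{Modus Ponens} as the only rule --- that is precisely what makes the deduction theorem available, and you correctly flag that this is the delicate point. Your easy direction (induction on the length of the modal prefix) is the same as the paper's, stated with slightly more care. One small point worth making explicit in your write-up: when the finite subset of $\Gamma$ used in the derivation is empty, $A$ is itself an ${\sf S5}^n$-theorem and genuine Necessitation applies directly, so the empty-conjunction case is harmless.
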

\begin{proof} Direction `if.' Assume $\Gamma\proves {\bf C}\Gamma$ and prove by induction on derivations that $\Gamma\proves X$ yields $\Gamma\proves \bk_i X$. For $X$ being from ${\sf S5}_n$, this follows from the rule of Necessitation in ${\sf S5}_n$. For $X\in\Gamma$, it follows from the assumption that $\Gamma\proves {\bf C}X$, hence $\Gamma\proves \bk_i X$. If $X$ is obtained from {\em Modus Ponens}, $\Gamma\proves Y\imp X$ and $\Gamma\proves Y$. By IH, $\Gamma\proves \bk_i(Y\imp X)$ and $\Gamma\proves \bk_i Y$. By the distributivity principle of ${\sf S5}_n$, $\Gamma\proves \bk_i X$.

For `only if,' suppose that $\Gamma$ is closed under Necessitation and $F\in\Gamma$, hence $\Gamma\proves F$. Using appropriate instances of the Necessitation rule in $\Gamma$ we can derive $P_1P_2P_3,\ldots,P_k F$ for each prefix $P_1P_2P_3,\ldots,P_k$ with $P_i$ is one of $\bk_1,\bk_2$. Therefore, $\Gamma\proves {\bf C}F$ and $\Gamma\proves {\bf C}\Gamma$.
\end{proof}

\begin{definition} Let $\Gamma, \Delta$ be sets of ${\sf S5}^n$-formulas. We say that {\it $\Delta$ is consistent over $\Gamma$} if $$\Gamma,\Delta \not\proves \bot.$$ 
\end{definition}

\subsection{Canonical Models of Sets of Assumptions}

We answer the question of when the canonical model of $\Gamma$ is a Kripke model. 

\begin{Theor}\label{main}  The following are equivalent:

a) ${\it CM}(\Gamma)$ is fully explanatory (i.e., a Kripke model); 

b) $\Gamma$ admits Necessitation;

c) $\Gamma$ proves its own common knowledge. 
\end{Theor}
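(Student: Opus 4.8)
The plan is to exploit Proposition~\ref{ck=nec}, which already establishes the equivalence of (b) and (c), so the real work is connecting these to the fully explanatory property (a). I would organize the argument as a cycle, or more economically, prove (b) $\Leftrightarrow$ (a) directly and cite Proposition~\ref{ck=nec} for (b) $\Leftrightarrow$ (c). The direction I would tackle first is the contrapositive of (a) $\Rightarrow$ (b): assume $\Gamma$ does \emph{not} admit Necessitation and exhibit a failure of the fully explanatory condition in ${\it CM}(\Gamma)$. Concretely, if Necessitation fails there is some $X$ with $\Gamma\proves X$ but $\Gamma\not\proves \bk_i X$ for some agent $i$. Then $X$ belongs to every maximal consistent extension of $\Gamma$, so $X$ holds at every world of ${\it CM}(\Gamma)$, and a fortiori at every world in $R_i(w)$ for every $w$. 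If I can locate a world $w\in{\it CM}(\Gamma)$ with $\neg\bk_i X\in w$, this directly contradicts the fully explanatory property: $X$ holds throughout $R_i(w)$ yet $\bk_i X\notin w$.

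The key step, and the one I expect to be the main obstacle, is producing that witnessing world $w$. Since $\Gamma\not\proves \bk_i X$, the set $\Gamma\cup\{\neg\bk_i X\}$ is consistent over ${\sf S5}^n$, hence extends to a maximal consistent set $w$ containing $\Gamma$; thus $w\in{\it CM}(\Gamma)$ and $\neg\bk_i X\in w$. The subtlety is confirming that $X$ really holds at \emph{all} of $R_i(w)$ and not merely that $X\in w$: this is where I use that $X$ is $\Gamma$-provable, so $X$ lies in \emph{every} element of ${\it CM}(\Gamma)$, and since $R_i(w)\subseteq{\it CM}(\Gamma)$ (accessibility stays within the model), every $v\in R_i(w)$ contains $X$. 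Thus $R_i(w)\forces X$ but $w\not\forces \bk_i X$, refuting full explanatoriness.

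For the converse, (b) $\Rightarrow$ (a), I assume $\Gamma$ admits Necessitation and verify the fully explanatory condition of Definition~\ref{FE}. Fix $w\in{\it CM}(\Gamma)$ and suppose $F\in v$ for all $v\in R_i(w)$; I must show $\bk_i F\in w$. The natural route is to argue contrapositively: if $\bk_i F\notin w$, then $\neg\bk_i F\in w$, so $\Gamma\cup\{\neg\bk_i F\}$ is consistent. The standard ${\sf S5}$ reasoning (as in the final remark of the excerpt, and mirroring the symmetry argument in Proposition~\ref{equiv}) shows that $\neg\bk_i F$ forces $\neg F$ to be consistent with everything $i$-accessible from $w$, yielding a world $v\in R_i(w)$ with $\neg F\in v$, contradicting $R_i(w)\forces F$. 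The role of the Necessitation hypothesis here is to guarantee that the canonical accessibility relation behaves saturatedly relative to $\Gamma$—i.e., that $R_i(w)$ contains enough worlds, which is exactly the mechanism identified in the excerpt's closing paragraph about ${\it CM}({\sf S5}^n)$ being saturated. I would verify carefully that closure under Necessitation is precisely what licenses adding $\neg F$ while preserving compatibility with $\Gamma$, since without it the would-be counterworld might be incompatible with $\Gamma$ and fall outside ${\it CM}(\Gamma)$.

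Finally I would close the argument by invoking Proposition~\ref{ck=nec} to identify (b) with (c), completing the chain (a) $\Leftrightarrow$ (b) $\Leftrightarrow$ (c). The delicate point throughout is keeping track of the distinction between consistency over ${\sf S5}^n$ and consistency \emph{over} $\Gamma$: every counterworld I construct must itself contain $\Gamma$ to live in ${\it CM}(\Gamma)$, and it is the Necessitation property of $\Gamma$ that reconciles the saturation needed for full explanatoriness with this membership constraint.
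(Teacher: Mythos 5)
Your proposal is correct, and its skeleton matches the paper's exactly: Proposition~\ref{ck=nec} is cited for (b)$\Leftrightarrow$(c), and your contrapositive argument for (a)$\Rightarrow$(b) --- extend $\Gamma\cup\{\neg\bk_i X\}$ to a world $w\in{\it CM}(\Gamma)$ and observe that $X$, being $\Gamma$-provable, lies in every world of $R_i(w)\subseteq{\it CM}(\Gamma)$ --- is precisely the paper's. Where you genuinely diverge is (b)$\Rightarrow$(a): the paper proves a full Truth Lemma for $({\it CM}(\Gamma),R)$ by induction on formulas ($F\in u$ iff $u\forces F$), of which the fully explanatory property is the $\bk$-case, whereas you verify the membership form of Definition~\ref{FE} directly, with no induction. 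Your route is shorter and suffices for (a) as literally defined; what the paper's induction buys is a self-contained justification of the parenthetical ``i.e., a Kripke model,'' since it re-establishes that membership coincides with Kripke forcing in ${\it CM}(\Gamma)$ rather than appealing to the earlier blanket identification of fully explanatory models with Kripke models. Both arguments, however, turn on the same consistency computation, which you defer to ``standard ${\sf S5}$ reasoning'' and which deserves to be written out, because it is exactly where Necessitation of $\Gamma$ is consumed (your analogy to the symmetry argument of Proposition~\ref{equiv} is not quite the right one --- that argument runs on negative introspection, while this one runs on necessitation plus distribution): if $\Gamma\cup w^{{\bf K}_i}\cup\{\neg F\}$ were inconsistent, where $w^{{\bf K}_i}=\{G\mid \bk_i G\in w\}$, then $\Gamma\proves G_1\wedge\ldots\wedge G_n\imp F$ for some $G_j\in w^{{\bf K}_i}$; Necessitation of $\Gamma$ gives $\Gamma\proves \bk_i(G_1\wedge\ldots\wedge G_n\imp F)$, distribution gives $\Gamma\proves \bk_i G_1\wedge\ldots\wedge\bk_i G_n\imp \bk_i F$, and since each $\bk_i G_j\in w$ and $\Gamma\subseteq w$, this forces $\bk_i F\in w$ --- a contradiction. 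Necessitation also yields $\Gamma\subseteq w^{{\bf K}_i}$, which is what guarantees that the counterworld $v\supseteq w^{{\bf K}_i}\cup\{\neg F\}$ lands back inside ${\it CM}(\Gamma)$ --- the membership constraint you rightly flagged as the delicate point.
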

\begin{proof}
The fact that (b) is equivalent to (c) has already been established in Proposition~\ref{ck=nec}. We now check (a) and (b). 

If $\Gamma$ does not admit Necessitation, there is a formula $F$ such that $\Gamma\proves F$, but $\Gamma\not\proves {\bf K}F$\footnote{Here again, we ignore indices $i$ in $\bk_i$ and $R_i$.}. $F$ holds everywhere in ${\it CM}(\Gamma)$. Furthermore, the set $\{\neg {\bf K}F\}$ is consistent over $\Gamma$; consider its maximal consistent extension $u$. Obviously, $\neg {\bf K}F$ holds in $u$ and $F$ holds in $R(u)$ which makes ${\it CM}(\Gamma)$ not fully explanatory and hence not a Kripke model. 

Suppose $\Gamma$ admits Necessitation. To prove that ${\it CM}(\Gamma)$ with $R$ as in (\ref{defR}) is a Kripke model, it suffices to establish the so-called Truth Lemma that membership in $u$ coincides with the truth value at $u$ in the Kripke model $({\it CM}(\Gamma),R)$:  
\[ 		F \in u\ \ \mbox{ iff }\ \ u \forces F\ \mbox{in}\ ({\it CM}(\Gamma),R). \]
Induction on $F$, trivial for atomic $F$'s and standard for Booleans. Consider the case $F={\bf K}X$. 
Let ${\bf K}X \in u$ and $uRv$. Then ${\bf KK}X\in u$, and ${\bf K}X\in v$. 
By {\sf S5}-reflexivity, ${\bf K}X\imp X\in v$, hence $X\in v$. By the IH, $v\forces X$, therefore $u\forces {\bf K}X$.

Put
\[ u^{\bf K}=\{F\mid {\bf K}F\in u\}. 
\]

Let ${\bf K}X\not\in u$. Then $u^{\bf K} \cup \{\neg X\}$ is consistent. Indeed, otherwise $$\Gamma\proves F_1\wedge\ldots\wedge F_n \imp X$$ for some  
$F_i\in u^{\bf K}$. Since $\Gamma$ is closed under Necessitation, $$\Gamma\proves {\bf K}(F_1\wedge\ldots\wedge F_n \imp X).$$ By standard {\sf S5}-reasoning,  $$\Gamma\proves {\bf K}F_1\wedge\ldots\wedge{\bf K} F_n \imp {\bf K}X.$$ 
Since ${\bf K}F_i\in u$, ${\bf K}X$ should be in $u$ as well -- a contradiction. 

Now consider a maximal consistent set $v$ extending  $u^{\bf K}\cup \{\neg X\}$. Obviously, $v\in {\it CM}(\Gamma)$. Since $u^{\bf K}\subseteq v$, $uRv$. Since $\neg X\in v, X\not\in v$, and, by the IH, $v\not\forces X$, which yields $u\not\forces {\bf K}X$. 

\end{proof} 

\begin{example} Consider worlds $A,B,C,D$ from the canonical model for ${\sf S5}(p)$, Section~\ref{single}, Figure 5.
The canonical model of $\Gamma=\{p\}$, ${\it CM}(p)$\footnote{We drop brackets in ${\it CM}(\{p\})$ and similar cases for better readability.}, is the set of worlds at which $p$ holds, i.e., $W=\{A,B\}$, cf. Figure 7. Model ${\it CM}(p)$ is not fully explanatory, not a Kripke model, and is an illustration of Theorem~\ref{main}, since $\{p\}$ is not closed under Necessitation.
\vskip20pt
 
 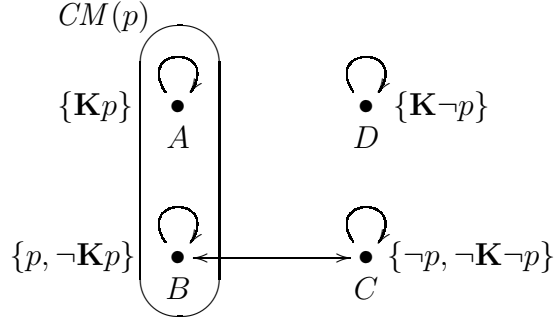
\begin{figure}[!h]
\begin{center}
\mbox{
\begin{xy}
(10,-4)*{C};
(-15,-4)*{B};
(-29,0)*{\mbox{$\{p, \neg\bk p\}$}};
(20,20)*{\mbox{$\{\bk \neg p\}$}};
(10,0)*+{\bullet}="2";
(-15,0)*+{\bullet}="1"; 
{\ar "1";"2"};
{\ar "2";"1"};
{\ar@(ul,ur) "1";"1"};
{\ar@(ul,ur)"2";"2"};
(10,16)*{D};
(-15,16)*{A};
(-26,20)*{\mbox{$\{\bk p\}$}};
(24,0)*{\mbox{$\{\neg p, \neg \bk \neg p\}$}};
(10,20)*+{\bullet}="A";
(-15,20)*+{\bullet}="D"; 
{\ar@(ul,ur) "A";"A"};
{\ar@(ul,ur)"D";"D"};
(-25,32)*{\mbox{${\it CM}(p)$}};
\put(-42,33){\oval(30,110,20)}
\end{xy}
}\end{center}
\caption{Canonical model ${\it CM}(p)$ (in the oval).}
\end{figure}

The canonical model of $\Gamma=\{\bk p\}$ is the set of worlds at which $\bk p$ holds, i.e., $W=\{A\}$. This $\Gamma$ enjoys Necessitation, its canonical epistemic model is fully explanatory, i.e., is a Kripke model. 

The canonical model of $\Gamma=\{\neg \bk p, \neg \bk \neg p\}$. By negative and positive introspection, $\Gamma$ is closed under Necessitation, hence ${\it CM}(\Gamma)$ should also be fully explanatory. Worlds $A$ and $D$ are not compatible with $\Gamma$ and hence are not in ${\it CM}(\Gamma)$. Since none of $B$ and $C$ is such a model (neither is fully explanatory), ${\it CM}(\Gamma)=\{B,C\}$. Indeed, it is an easy exercise to derive $\neg \bk p$ and $\neg \bk \neg p$ in $B$ and $C$. This is a Kripkean situation, i.e., ${\it CM}(\{\neg \bk p, \neg \bk \neg p\})$ is fully explanatory.
\end{example}

\section{Revisiting the Motivating Example: Scaffolding}\label{diss}

We now offer a solution to ``the motivating example," Example~\ref{AB} from Section~\ref{motiv}. 

In Figure 4, we showed a failed attempt to build a Kripke model ${\cal M}_4$ with two worlds $w$ (with {\it heads}, $Q$, and $\neg\bk Q$) and $v$ 
(with {\it tails}, $Q$, and $\neg\bk Q$). An easy argument shows that such a Kripke model does not exist.
Instead of ${\cal M}_4$ we could try to build an epistemic model (Definition~\ref{estate}) ${\cal M}_6$ consisting of two worlds
\begin{itemize}
\item $w$, that extends $a$ from ${\cal M}_2$ and contains $Q$, $\neg\bk Q$; 
\item $v$,  that extends $b$ from ${\cal M}_2$ and contains $Q$, $\neg\bk Q$.
\end{itemize}
\vskip10pt
\begin{figure}[!h]
\begin{center}
\mbox{
\begin{xy}
(10,-3)*{v};
(-15,-3)*{w};
(-35,0)*{Q, \neg\bk Q, \mbox{\it heads},\ldots};
(28,0)*{Q, \neg\bk Q, \mbox{\it tails},\ldots};
(10,0)*+{\bullet}="2";
(-15,0)*+{\bullet}="1"; 
{\ar "1";"2"};
{\ar "2";"1"};
%{\ar@(ul,ur) "1";"1"};
%{\ar@(ul,ur)"2";"2"};
\end{xy}
}\end{center}
\caption{Epistemic model $\mathcal{M}_6$.}
\end{figure}
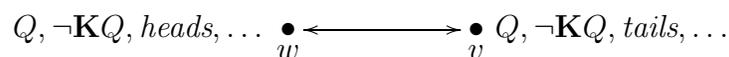
${\cal M}_6$ would be a legitimate epistemic model which is not fully explanatory and hence not a Kripke model. 

There is a subtlety though: the aforementioned specification of ${\cal M}_6$ is not explicit: worlds $w$ and $v$ are not defined, but rather presented by sufficient conditions: for $a$, $b$ from ${\cal M}_2$, 
\begin{equation}\label{conditions}
\mbox{$a\subset w$ and $Q, \neg\bk Q \in w$, $b\subset v$ and $Q, \neg\bk Q \in v$.}
\end{equation}
So, strictly speaking, ${\cal M}_6$ is under-defined. Are there worlds $w$ and $v$ satisfying these conditions? Can we produce explicit examples of such $w$ and $v$? One observation is immediate, neither $w$ nor $v$ is uniquely defined, e.g., though conditions (\ref{conditions}) yield $\neg\bk(\mbox{\it heads})$ and $\neg\bk Q$, they yield neither 
$\bk (\mbox{\it heads}\vee Q)$ nor its negation\footnote{Easy counter-models for both.}. 
We now answer these legitimate questions and provide a final version of an epistemic model for the story from Example~\ref{AB}. 
First, we consider a Kripke model ${\cal M}_7$ (identical to ${\cal M}_3$):

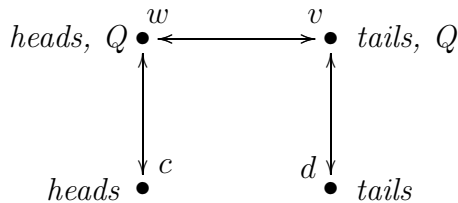
\begin{figure}[!h]
\begin{center}
\mbox{
\begin{xy}
(8,3)*{v};
(-13,3)*{w};
(7,-17)*{d};
(-12,-17)*{c};
(-25,0)*{\mbox{\it heads, Q}};
(20,0)*{\mbox{\it tails, Q}};
(-23,-20)*{\mbox{\it heads}};
(17,-20)*{\mbox{\it tails}};
(10,0)*+{\bullet}="2";
(-15,0)*+{\bullet}="1"; 
(10,-20)*+{\bullet}="4";
(-15,-20)*+{\bullet}="3";
{\ar "1";"2"};
{\ar "2";"1"};
{\ar "1";"2"};
{\ar "2";"1"};
{\ar "1";"3"};
{\ar "3";"1"};
{\ar "2";"4"};
{\ar "4";"2"};
\end{xy}
}\end{center}
\caption{Model $\mathcal{M}_7$.}
\end{figure}
\noindent
It is easy to check that worlds $a$ and $b$ from ${\cal M}_2$ hold at $w$ and $v$ respectively. Furthermore, 
\[ w,v\forces Q, \neg\bk Q, \]
so $w$ and $v$ from $\mathcal{M}_7$ satisfy conditions~(\ref{conditions}) and they are explicitly defined in $\mathcal{M}_7$.

It now remains to carve the desired epistemic model $\mathcal{M}_8$ out of $\mathcal{M}_7$ with its fictional states $c$ and $d$: 
\vskip10pt
\begin{figure}[!h]
\begin{center}
\mbox{
\begin{xy}
(-47,0)*{\mbox{\it $\mathcal{M}_8:$}};
(8,3)*{v};
(-13,3)*{w};
(7,-17)*{d};
(-12,-17)*{c};
(-25,0)*{\mbox{\it heads, Q}};
(20,0)*{\mbox{\it tails, Q}};
(-23,-20)*{\mbox{\it heads}};
(17,-20)*{\mbox{\it tails}};
(10,0)*+{\bullet}="2";
(-15,0)*+{\bullet}="1"; 
(10,-20)*+{\bullet}="4";
(-15,-20)*+{\bullet}="3";
{\ar "1";"2"};
{\ar "2";"1"};
{\ar "1";"2"};
{\ar "2";"1"};
{\ar "1";"3"};
{\ar "3";"1"};
{\ar "2";"4"};
{\ar "4";"2"};
\put(-10,1){\oval(200,30,20)}
\end{xy}
}\end{center}
\caption{Epistemic model $\mathcal{M}_8$, in the oval.}
\end{figure}
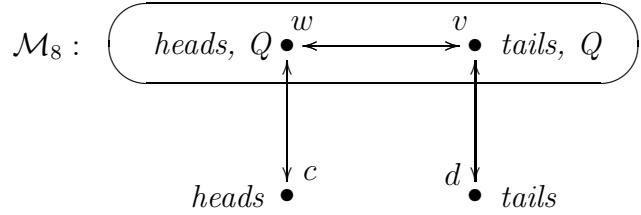

We offer ${\cal M}_8$ as a superior description of Ann's states from Example~\ref{AB} compared to Kripke-style models ${\cal M}_2$ and ${\cal M}_3$.  Ann considers possible only two states $w$ and $v$, in which both $Q$ and  $\neg\bk Q$ hold. Since Ann in unaware of $Q$, she does not know $\mathcal{M}_8$ in full. Model $\mathcal{M}_8$ is not fully explanatory: $Q$ holds everywhere in  $\mathcal{M}_8$ but is not known.

What is the role of technical states $c$ and $d$? We use them in model $\mathcal{M}_7$ as ``scaffolding," to properly define possible states $w$ and $v$, and then remove $c$ and $d$ as auxiliary material to obtain the true epistemic model $\mathcal{M}_8$. 

In view of embedding Theorem~\ref{two}, such a {\it scaffolding method} can be offered as universal: in order to define an epistemic model with possible worlds $W$, we consider a convenient Kripke model with states $\widetilde{W}\supset W$ thus using the power of Kripke models to define states, and then cut off auxiliary states $\widetilde{W}\setminus W$ to get the desired epistemic model.

We now introduce Bob's knowledge to obtain the final version of the epistemic model. We assume that Bob knows $Q$, but not the results of a coin toss. 
Again, we ``scaffold."
\begin{figure}[!h]
\begin{center}
\mbox{
\begin{xy}
(10,3)*{v};
(-15,3)*{w};
(-3,3)*{R_1,R_2};
(7,-17)*{d};
(-12,-17)*{c};
(-25,0)*{\mbox{\it heads, Q}};
(-18,-10)*{R_1};
(20,0)*{\mbox{\it tails, Q}};
(13,-10)*{R_1};
(-23,-20)*{\mbox{\it heads}};
(17,-20)*{\mbox{\it tails}};
(10,0)*+{\bullet}="2";
(-15,0)*+{\bullet}="1"; 
(10,-20)*+{\bullet}="4";
(-15,-20)*+{\bullet}="3";
{\ar "1";"2"};
{\ar "2";"1"};
{\ar "1";"2"};
{\ar "2";"1"};
{\ar "1";"3"};
{\ar "3";"1"};
{\ar "2";"4"};
{\ar "4";"2"};
%\put(-10,0){\oval(200,30,20)}
\end{xy}
}\end{center}
\caption{Kripke model $\mathcal{M}_9$.}
\end{figure}
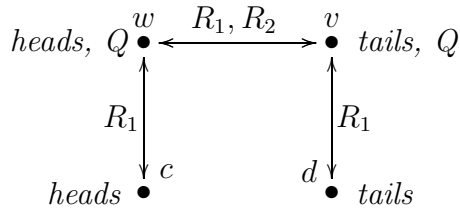
We take a Kripke model $\mathcal{M}_9$ as in Figure 11, with $\widetilde{W}=\{w,v,c,d\}$, $R_1(w)=\widetilde{W}$, $R_2(w)=\{w,v\}$, and $\Vdash$ as shown. 
At $w$, Ann and Bob do not know {\it heads} and {\it tails}, Bob knows $Q$ since $Q$ holds in $R_2(w)$, Ann does not know $Q$ since $Q$ fails in $c$ accessible from $w$, etc. 

The very last step of building an adequate model for Example~\ref{AB}, $\mathcal{M}_{10}$, is removing auxiliary states $c$ and $d$, which are considered possible by neither Ann nor Bob, and maintaining states $w$ and $v$ as they were in $\mathcal{M}_9$.
\vskip7pt
\begin{figure}[!h]
\begin{center}
\mbox{
\begin{xy}
(-47,0)*{\mbox{\it $\mathcal{M}_{10}$:}};
(10,3)*{v};
(-15,3)*{w};
(-2,3)*{R_1,R_2};
(7,-17)*{d};
(-12,-17)*{c};
(-25,0)*{\mbox{\it heads, Q}};
(-18,-10)*{R_1};
(20,0)*{\mbox{\it tails, Q}};
(13,-10)*{R_1};
(-23,-20)*{\mbox{\it heads}};
(17,-20)*{\mbox{\it tails}};
(10,0)*+{\bullet}="2";
(-15,0)*+{\bullet}="1"; 
(10,-20)*+{\bullet}="4";
(-15,-20)*+{\bullet}="3";
{\ar "1";"2"};
{\ar "2";"1"};
{\ar "1";"2"};
{\ar "2";"1"};
{\ar "1";"3"};
{\ar "3";"1"};
{\ar "2";"4"};
{\ar "4";"2"};
\put(-10,2){\oval(200,30,20)}
\end{xy}
}\end{center}
\caption{Epistemic model $\mathcal{M}_{10}$, in the oval.}
\end{figure}
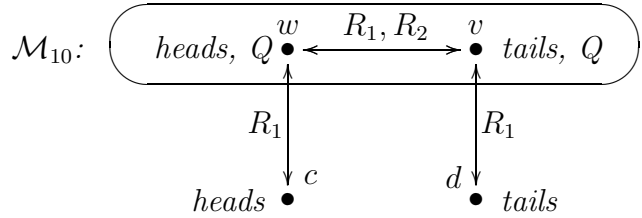
Let us read model $\mathcal{M}_{10}$. Both Ann and Bob deem states $w$ and $v$ possible, but differ in knowledge of the truth assignments. Bob knows the whole model $\mathcal{M}_{10}$:  $Q$ holds at both $w$ and $v$, {\it heads} holds at $w$, and {\it tails} holds at $v$. Ann knows that {\it heads} holds at $w$ and {\it tails} holds at $v$, but is unaware of $Q$ and has no idea that $Q$ holds in $\mathcal{M}_{10}$. 

\section{Findings and Suggestions}
 We suggest streamlining the foundations of epistemic modeling. If we start from a given set of possible worlds and define an accessibility/indistinguishability relation between them in the standard way, the result might not be a Kripke model: only those structures that are fully explanatory correspond to Kripke models. These assumptions should be made explicit.
 
 The fully explanatory property is a propositional formalization of common knowledge of the model, which has been tacitly assumed in epistemic modal logic. Loosely speaking, 
 \[ \mbox{\it Kripke Models are Epistemic Models commonly known to the agents.} \]

We have sketched a basic theory of epistemic models constituting a broader and more expressive class than  traditional Kripke models. We see perspectives of using epistemic models in situations with partial and asymmetric knowledge of the model, and in other epistemic scenarios in which a model itself in not necessarily common knowledge. 

Our approach does not deny Kripke models. Moreover, within the framework of Theorem~\ref{two},  the latter can be used as a convenient technical tool for working with epistemic models, cf. ``scaffolding method" from Section~\ref{diss}.

\section{Acknowledgements}
The author is grateful to Vladimir Krupski, Elena Nogina, and Tudor Protopopescu for helpful suggestions. 
Special thanks to Karen Kletter for editing and proofreading this text.


\begin{thebibliography}{10}

\bibitem{Aum95}
R.~Aumann.
\newblock Backward induction and common knowledge of rationality.
\newblock {\em  Games and Economic Behavior}, 8(1):6--19, 1995.

\bibitem{Aum99}
R.~Aumann.
\newblock Interactive epistemology I: Knowledge.
\newblock {\em  International Journal of Game Theory}, 28:263--300, 1999. 

\bibitem{Art01a}
S.~Artemov.
\newblock Explicit provability and constructive semantics.
\newblock {\em Bulletin of Symbolic Logic}, 7(1):1--36, 2001.
  
\bibitem{Art08}
S.~Artemov.
\newblock The logic of justification.
\newblock {\em The Review of Symbolic Logic}, 1(4):477--513,
2008.

\bibitem{Art12}
S.~Artemov.
\newblock The Ontology of Justifications in the Logical Setting.
\newblock {\em Studia Logica} 100(1--2):17--30.
2012.

\bibitem{AF15}
S.~Artemov and M.~Fitting. 
\newblock Justification Logic.
\newblock {\em The Stanford Encyclopedia of Philosophy (Winter 2015 Edition)}, Edward N. Zalta (ed.), \\ URL = http://plato.stanford.edu/archives/win2015/entries/logic-justification, 2015.

\bibitem{FHMV95}
R.~Fagin, J.~Halpern, Y.~Moses, and M.~Vardi.
\newblock {\em Reasoning About Knowledge}.
\newblock MIT Press, 1995.

\bibitem{Fit05}
M.~Fitting.
\newblock The logic of proofs, semantically.
\newblock {\em Annals of Pure and Applied Logic}, 132(1):1--25, 2005.

\bibitem{MvdH95}
J.-J.~Ch.~Meyer and W.~van~der Hoek.
\newblock {\em Epistemic Logic for AI and Computer Science}.
\newblock Cambridge, 1995.

\bibitem{Moo42}
G.E.~Moore. 
\newblock A reply to my critics.
\newblock {\em The Philosophy of G.E. Moore}, ed. P. Schilpp. La Salle, Illinois: Open Court, 1942. 

\bibitem{Wil15}
T.~Williamson.
\newblock A note on Gettier cases in epistemic logic.
\newblock {\em Philosophical Studies}, 172(1):129--140, 2015.

\end{thebibliography}
\end{document}